\documentclass[10pt]{amsart}
\usepackage{graphicx} 
\usepackage{amsmath, amsthm, amssymb}
\usepackage[usenames,dvipsnames,svgnames,table]{xcolor}
\usepackage[margin=1.3in]{geometry}
\usepackage{esint}

\newtheorem{theorem}{Theorem}[section]
\newtheorem{proposition}[theorem]{Proposition}

\newtheorem{lemma}[theorem]{Lemma}

\newtheorem{corollary}[theorem]{Corollary}

\newcommand{\R}{\mathbb R}

\newcommand{\eps}{\varepsilon}

\newcommand{\dd}{\, \mathrm{d}}

\DeclareMathOperator{\dist}{dist}

\numberwithin{equation}{section}

\title[Regularity for nonlocal Bernoulli problems]{Regularity and nondegeneracy for nonlocal Bernoulli problems with variable kernels}
\author[Stanley Snelson]{Stanley Snelson}
\address{Department of Mathematics and Systems Engineering, 150 W. University Blvd, Florida Institute of Technology, Melbourne, FL 32901}
\email{ssnelson@fit.edu}

\author[Eduardo V. Teixeira]{Eduardo V. Teixeira}
\address{Department of Mathematics, University of Central Florida, 4393 Andromeda Loop N, Orlando, FL 32816}
\email{eduardo.teixeira@ucf.edu}


\thanks{SS was partially supported by NSF grant DMS-2213407 and a Collaboration Grant from the Simons Foundation, Award \#855061.}

\begin{document}

\maketitle

\begin{abstract}
    We consider a generalization of the Bernoulli free boundary problem where the underlying differential operator is a nonlocal, non-translation-invariant elliptic operator of order $2s\in (0,2)$. Because of the lack of translation invariance, the Caffarelli-Silvestre extension is unavailable, and we must work with the nonlocal problem directly instead of transforming to a thin free boundary problem. We prove global H\"older continuity of minimizers for both the one- and two-phase problems. Next, for the one-phase problem, we show H\"older continuity at the free boundary with the optimal exponent $s$. We also prove matching nondegeneracy estimates. A key novelty of our work is that all our findings hold without requiring any regularity assumptions on the kernel of the nonlocal operator. This characteristic makes them crucial in the development of a universal regularity theory for nonlocal free boundary problems.

\end{abstract}

\section{Introduction}

We consider a free boundary problem in variational form, that is described as follows: Let $K(x,y)$ be a measurable kernel satisfying the ellipticity condition
\begin{equation}\label{e:ellipticity}
(1-s) \frac \lambda {|x-y|^{d+2s}} \leq K(x,y) \leq (1-s) \frac \Lambda {|x-y|^{d+2s}}, \quad x,y \in \R^d,
\end{equation}
for some $s\in (0,1)$ and $\Lambda > \lambda >0$, as well as the symmetry condition
\begin{equation}\label{e:symmetry}
K(x,y) = K(y,x).
\end{equation}
For a bounded domain $\Omega \subset \R^d$, define the functional
\begin{equation}\label{e:JK}
\mathcal J_{K}(u) := \iint_{\R^{2d}\setminus (\Omega^c)^2}  K(x,y) |u(y) - u(x)|^2 \dd y \dd x + |\{u>0\}\cap \Omega|,
\end{equation}
where $|\cdot|$ is Lebesgue measure on $\R^d$. For a given function $g \in L^2(\R^d\setminus \Omega)$, we want to minimize $\mathcal J_K$ over the space
\[
\mathcal H^s_g(\Omega) := \{ w \in L^2(\R^d) \cap H^s(\Omega) : w = g \text{ a.e. in } \R^d\setminus \Omega\}.
\]
The integral in \eqref{e:JK} is defined over $\R^{2d}\setminus (\Omega^c)^2$ because variations in the space $\mathcal H^s_g(\Omega)$ do not affect the integral over $(\Omega^c)^2$.


We are interested in both the {\it one-phase problem}, where $g \geq 0$ and the minimizer $u$ can be chosen nonnegative, and the {\it two-phase problem}, where $g$ and $u$ may change sign.


For any kernel $K$, we denote the corresponding nonlocal operator by
\[
\mathcal L_Ku = \int_{\R^d} K(x,y) [u(y) - u(x)] \dd y.
\]

The formal Euler-Lagrange equation of our variational problem is the overdetermined nonlocal equation
\begin{equation}\label{e:bernoulli}
\begin{split}
\mathcal L_K u &= 0, \quad x\in \Omega\cap \{u>0\},\\
u(x) &= g(x), \quad x\in \R^d\setminus \Omega,\\
\lim_{\Omega \cap\{u>0\} \ni \xi \to x} \frac{u(\xi) - u(x)}{((\xi-x)\cdot\nu(x))^s} &= A(x), \quad x\in \partial\{u>0\}\cap \Omega,
\end{split}
\end{equation}
for some $A(x)$ depending on the kernel $K$. This generalizes the classical Bernoulli problem, which consists of finding a function $u$ that is harmonic in $\{u>0\}$ and whose normal derivative from inside $\{u>0\}$ equals 1 at boundary points of $\{u>0\}$. 


There is a broad interest in Bernoulli-type problems for the fractional Laplacian, which corresponds to $K(x,y) = c|x-y|^{-d-2s}$. (See Section \ref{s:related} below for a partial bibliography.) The nonlocality of the underlying operator makes it difficult to apply the standard techniques, as variations in a small region affect the solution everywhere. 
For this reason, the most common approach uses the Caffarelli-Silvestre extension to relate the fractional problem to a thin cavitation problem in $\R^{d+1}$. This corresponds to minimizing
\[
\mathcal J_{\rm thin}(u) = \int_{\R^{d+1}} |x_{n+1}|^{1-2s}|\nabla u|^2 \dd x + \mathcal L^d(\{u>0\}\cap \Omega \cap \{x_{n+1}=0\}),
\]
where $u$ is defined on $\R^{d+1}$. A solution of the Bernoulli problem for the $d$-dimensional fractional Laplacian $(-\Delta)^s$ yields a minimizer of $\mathcal J_{\rm thin}$ via the Poisson formula. 
However, this extension technique is unavailable for general kernels $K(x,y)$. Our goal in this article is to develop a new approach that can handle any kernel satisfying \eqref{e:ellipticity} and \eqref{e:symmetry}. In particular, we do not assume any continuity of our kernels with respect to $x$ and $y$.

\subsection{Main results}

Our results are stated in terms of the tail of a minimizer $u$. Borrowing the notation of \cite{DKP2016}, we define, for any $R>0$ and $x_0 \in \R^d$,
\begin{equation}\label{e:tail}
    \mbox{Tail}(u;x_0,R) = R^{2s} \int_{\R^d\setminus B_{R}(x_0)} \frac {|u(x)|} {|x-x_0|^{d+2s}} \dd x.
\end{equation}

Our first main result is global H\"older continuity for minimizers:

\begin{theorem}\label{t:reg}
With $\Omega$ and $g$ as above, there exist $\alpha \in (0,1)$, $C>0$, and $r_0 >0$, depending only on $d$, $s$, $\lambda$, and $\Lambda$, such that 
\[
\|u\|_{C^\alpha(B_{r/2}(x_0))} \leq C \left( \left(\fint_{B_r(x_0)} u^2 \dd x\right)^{1/2} + \textup{\mbox{Tail}}(u;x_0,r/2)\right),
\]
for any minimizer $u$ of $\mathcal J_K$ over $\mathcal H^s_g(\Omega)$, and any $B_r(x_0)\subset \Omega$ with $r<r_0$.
\end{theorem}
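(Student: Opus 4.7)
The plan is to adapt the nonlocal De Giorgi--Nash--Moser iteration scheme (in the spirit of \cite{DKP2016}) to the variational setting with Bernoulli penalty. The key observation is that the measure term $|\{u > 0\}\cap \Omega|$ interacts favorably with the natural truncation competitors: for any $k \in \R$ and any nonnegative cutoff $\eta \in C^\infty_c(B_r(x_0))$, the function $v := u - \eta^2(u-k)_+$ satisfies $v \leq u$ pointwise, hence $\{v>0\}\subseteq\{u>0\}$, and the measure term drops out of the minimality inequality $\mathcal J_K(u) \leq \mathcal J_K(v)$, leaving the clean energy bound
\[
\iint_{\R^{2d}\setminus (\Omega^c)^2} K(x,y) |u(y)-u(x)|^2 \dd y\dd x \leq \iint_{\R^{2d}\setminus (\Omega^c)^2} K(x,y) |v(y)-v(x)|^2 \dd y\dd x.
\]
Standard algebraic manipulations then yield a fractional Caccioppoli inequality for $(u-k)_+$ valid for every $k \in \R$. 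A symmetric argument with $v := u + \eta^2(k-u)_+$ produces the analogous Caccioppoli for $(u-k)_-$, which remains clean whenever $k \leq 0$ and picks up only an $|B_r|$-type error term when $k > 0$.

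With these Caccioppoli inequalities in hand, the proof follows classical lines. Combining the estimate on $(u-k)_+$ with the fractional Sobolev embedding and running a De Giorgi $L^2$-to-$L^\infty$ iteration over a geometric sequence of levels and nested balls yields the one-sided bound
\[
\sup_{B_{r/2}(x_0)} u_+ \leq C\left(\Bigl(\fint_{B_r(x_0)} u_+^2 \dd x\Bigr)^{1/2} + \textup{Tail}(u_+;x_0,r/2)\right),
\]
and the symmetric iteration on $(u-k)_-$ using only $k \leq 0$ (always in the clean regime) delivers the matching lower bound. H\"older continuity then follows from the standard oscillation-decay scheme for nonlocal equations: a logarithmic energy estimate combined with the $L^2$-to-$L^\infty$ lemma shows $\osc_{B_{r/2}(x_0)} u \leq \theta\, \osc_{B_r(x_0)} u + (\textup{tail})$ for some universal $\theta \in (0,1)$, and iteration produces a H\"older exponent $\alpha = \alpha(d,s,\lambda,\Lambda) \in (0,1)$. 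The theorem is obtained by combining the local $L^\infty$ bound with this oscillation decay.

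The point requiring the most care is the asymmetry caused by the Bernoulli term: the Caccioppoli inequality for $(u-k)_-$ acquires an $|B_r|\sim r^d$ error when $k > 0$, which is precisely the regime needed to raise the lower extremum of $u$ in the oscillation-decay step when $B_r$ is populated predominantly by positive values. The resolution is a scaling dichotomy. Since the leading Caccioppoli terms scale as $(\osc u)^2 r^{d-2s}$, the measure-term error is subleading whenever $\osc_{B_r} u \gtrsim r^s$; in the complementary regime $\osc_{B_r} u \lesssim r^s$, the desired H\"older estimate with exponent $\alpha \leq s$ already holds trivially. Tracking this dichotomy carefully through the iteration, while treating the measure term as a forcing on the right-hand side of the Caccioppoli inequalities, delivers the theorem with no continuity assumption on the kernel $K$.
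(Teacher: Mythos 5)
Your proposal takes a genuinely different route from the paper. The paper's argument is an approximation scheme: rescale so that the volume penalty coefficient $\rho$ is small (Lemma \ref{l:scaling}), show the minimizer $u$ is $O(\sqrt\rho)$-close in $L^2$ to its $\mathcal L_K$-harmonic replacement $h$ (Lemma \ref{l:L2}), borrow the known interior $C^\beta$ estimate of \cite{DKP2016} for $h$ (Proposition \ref{p:holder}) to get discrete excess decay for $u$ at a fixed scale (Lemma \ref{l:first-step}), and iterate while tracking the tail of $u-\mu_n$ explicitly (Lemma \ref{l:rescaled-holder}). You instead run a direct nonlocal De Giorgi iteration using truncation competitors, with the key (and correct) observation that $v=u-\eta^2(u-k)_+$ satisfies $v\le u$, hence $\{v>0\}\subseteq\{u>0\}$ and the Bernoulli term drops cleanly out of $\mathcal J_K(u)\le\mathcal J_K(v)$, giving an unconditional Caccioppoli inequality for $(u-k)_+$; the $(u-k)_-$ inequality is clean for $k\le 0$ and picks up an $r^d$ error for $k>0$, which you propose to absorb via the scaling dichotomy $\osc_{B_r}u\gtrless r^s$. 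This is a legitimate, more self-contained strategy in the Alt--Caffarelli spirit, and the dichotomy does close in principle because $r^d$ is subcritical against the $(\osc u)^2r^{d-2s}$ scaling of the energy whenever $\osc_{B_r}u\gtrsim r^s$, while in the other regime $\osc_{B_r}u\lesssim r^s$ already implies the $C^\alpha$ decay for any $\alpha\le s$ with constants made uniform across scales by taking $C$ large. The trade-off is that you must redo the entire De Giorgi/oscillation machinery with a forcing term, whereas the paper treats the DKP2016 estimates as black boxes.

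The one place where your sketch is genuinely thin is the tail. In the nonlocal De Giorgi scheme the oscillation-decay inequality $\osc_{B_{r/2}}u\le\theta\,\osc_{B_r}u+\textup{Tail}(\cdot)$ has a tail term that is \emph{not} controlled by $\osc_{B_r}u$, and as you pass to nested balls $B_{r_0^n}$ the relevant tail is $\textup{Tail}(u-\mu_n;x_0,r_0^n)$ for a drifting sequence of levels $\mu_n$. The paper devotes the bulk of the proof of Lemma \ref{l:rescaled-holder} to a second, coupled induction hypothesis \eqref{e:induction-tail} precisely to keep this quantity under control, and obtaining it requires splitting the tail into a near annulus (handled by the local $L^\infty$ bound, Proposition \ref{p:Linfty}) and a far tail (handled by the previous step), plus an estimate \eqref{e:mu-k} on $|\mu_{n+1}-\mu_n|$. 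Your proposal acknowledges tails appear but does not set up this coupled induction, and without it the oscillation iteration does not close. This is not a fatal flaw of the strategy—the same bookkeeping should be portable to your Caccioppoli-based iteration—but it is the real content of the nonlocal case, and your write-up treats it as routine when it is not.
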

We emphasize that Theorem \ref{t:reg} holds for both the one-phase and two-phase problems.

Since minimizers satisfy $\mathcal L_K u = 0$ in $\{u>0\}$, the best interior regularity one can expect is H\"older continuity with an exponent depending on $\lambda$ and $\Lambda$ that could in general be very close to zero. However, for the one-phase problem, we can prove a stronger regularity result that holds at the free boundary, namely H\"older continuity of order $s$. This result is optimal in 
view of our nondegeneracy Theorem \ref{t:nondeg}. We also note that the $s$-H\"older norm is scaling-critical for the functional $\mathcal J_K$ (see Lemma \ref{l:scaling} below). Our optimal regularity result is as follows:

\begin{theorem}\label{t:s-reg}
There exist $C, r_0>0$, depending only on $d$, $s$, $\lambda$, and $\Lambda$, so that for any one-phase minimizer $u\geq 0$ of $\mathcal J_K$, and any $x_0 \in \partial \{u>0\}$, there holds
\[
\frac{u(x)}{|x-x_0|^s} \leq C \left( \left(\fint_{B_r(x_0)} u^2 \dd x\right)^{1/2} + \textup{\mbox{Tail}}(u;x_0,r/2)\right), \quad x \in B_{r/2}(x_0),
\]
whenever $B_r(x_0)\subset \Omega$ and $r<r_0$.
\end{theorem}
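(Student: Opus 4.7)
The plan is to prove this via a contradiction/compactness argument that exploits the critical scale-invariance encoded in Lemma~\ref{l:scaling}: the map $u \mapsto \rho^{-s}\, u(x_0 + \rho\,\cdot)$ sends minimizers of $\mathcal J_K$ to minimizers of a rescaled functional $\mathcal J_{K_\rho}$. By Theorem~\ref{t:reg} the minimizer $u$ is continuous, hence $u(x_0) = 0$ at free-boundary points, and the statement amounts to the growth bound $u(x) \leq C\,|x-x_0|^s\, M$ for $x \in B_{r/2}(x_0)$, where $M$ denotes the right-hand normalization. Suppose this fails. After translating so $x_0 = 0$ and normalizing $M = 1$, I obtain a sequence of one-phase minimizers $u_k$ with $0 \in \partial\{u_k > 0\}$ and radii $\rho_k \to 0$ such that $\sup_{B_{\rho_k}} u_k \geq k \rho_k^s$. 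I then choose $\rho_k$ to be the \emph{largest} such radius in $(0, r/2]$, so that the reverse bound $\sup_{B_\rho} u_k \leq k \rho^s$ holds for every $\rho \in [\rho_k, r/2]$, and I set $v_k(x) := u_k(\rho_k x)/(k \rho_k^s)$. Lemma~\ref{l:scaling} then identifies $v_k$ as a minimizer of $\mathcal J^{\rm Dir}_{K_k} + k^{-2}\,|\{v > 0\}|$ on $B_{r/(2\rho_k)}$, where $K_k(x,y) := \rho_k^{d+2s} K(\rho_k x, \rho_k y)$ still satisfies \eqref{e:ellipticity}--\eqref{e:symmetry} with the same constants $\lambda, \Lambda$. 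By construction $v_k(0) = 0$, $\sup_{B_1} v_k = 1$, $v_k(x) \leq |x|^s$ for $1 \leq |x| \leq r/(2\rho_k)$, and the polynomial growth together with the Tail bound on $u_k$ translate to a uniform tail control on $v_k$.

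Applying Theorem~\ref{t:reg} to the $v_k$ with these uniform inputs yields a uniform local Hölder estimate, and I extract a subsequence $v_k \to v_\infty$ locally uniformly on $\R^d$, with $v_\infty \geq 0$, $v_\infty(0) = 0$, $\sup_{B_1} v_\infty = 1$, and $v_\infty(x) \leq |x|^s$. Because the measure-penalty coefficient $k^{-2}$ vanishes in the limit, $v_\infty$ is a local minimizer of a purely Dirichlet energy associated to some weak-$\ast$ limit kernel $K_\infty$ (of the same ellipticity class), so $\mathcal L_{K_\infty} v_\infty = 0$ throughout $\R^d$. Since $v_\infty(0) = 0$, evaluating at the origin gives $\int K_\infty(0, y)\, v_\infty(y)\, \dd y = 0$; as $v_\infty \geq 0$ and $K_\infty$ is strictly positive almost everywhere, this forces $v_\infty \equiv 0$, contradicting $\sup_{B_1} v_\infty = 1$.

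The main obstacle is the passage of the \emph{variational structure} to the limit in the absence of any kernel regularity. Because $K_k$ converges only weakly, one cannot pass the Euler--Lagrange equation or the minimality pointwise; instead one must invoke the compactness theory for solutions of uniformly elliptic nonlocal equations (the De~Giorgi--Nash--Moser/Krylov--Safonov theory for integrodifferential operators) together with a $\Gamma$-convergence-type argument showing that the vanishing measure penalty deposits no phantom contribution in the limit. A further delicate point is the maximal choice of $\rho_k$: this is precisely what produces the polynomial bound $v_k(x) \leq |x|^s$, without which one would lose both the uniform tail control and the global compactness of the $v_k$ needed to identify the entire limit $v_\infty$.
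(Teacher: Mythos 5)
Your blow-up/compactness strategy is a genuinely different route from the paper's (which proves a quantitative one-step decay estimate, Lemma~\ref{l:local-sup}, and iterates it at dyadic scales $10^{-n}$, never passing to any limit). The skeleton is fine: the maximal-radius normalization, the identification of $v_k$ as a minimizer of $\mathcal J_{K_k,1/k^2,0}$ via Lemma~\ref{l:scaling}, and the Harnack/Liouville argument at the end (a global nonnegative solution of $\mathcal L_{K_\infty}v=0$ with $v(0)=0$ must vanish identically) are all sound \emph{once} the limit $v_\infty$ is known to solve a nonlocal equation of the same ellipticity class.

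The gap is precisely at that step, and you underestimate it. You assert that $v_\infty$ is harmonic for ``some weak-$*$ limit kernel $K_\infty$,'' but with only measurable kernels this is false in general: this is exactly the phenomenon of $H$-convergence. For a sequence of bilinear forms $B_k(u,\phi)=\iint K_k(x,y)(u(y)-u(x))(\phi(y)-\phi(x))\,\dd y\,\dd x$ with $K_k$ in the ellipticity class \eqref{e:ellipticity}, the weak-$*$ limit of $K_k|x-y|^{d+2s}$ does \emph{not} identify the limit operator; the limit of $B_k(v_k,\phi)$ need not equal $\iint K_\infty(v_\infty(y)-v_\infty(x))(\phi(y)-\phi(x))$ because $v_k$ converges only weakly, not strongly, in $H^s_{\rm loc}$, so the product of weak-$*$ and weak limits does not pass. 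To make your scheme rigorous you would have to invoke a $G$-compactness theorem for nonlocal operators with measurable kernels (to produce \emph{some} operator of order $2s$ in the same ellipticity class for which $v_\infty$ is a solution), together with a $\Gamma$-convergence argument to discard the vanishing volume term. Neither is supplied, and the former is far from elementary; it is exactly the kind of machinery the paper is structured to avoid. The paper's proof sidesteps all of this by proving Lemma~\ref{l:local-sup}---a scale-invariant smallness estimate using only the $L^2$ approximation Lemma~\ref{l:L2}, the subsolution property, Proposition~\ref{p:Linfty}, and the Harnack inequality for the harmonic lifting $h$---and then iterating it directly; no kernel ever has to be sent to a limit. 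If you want to complete your argument, the cleanest fix is to notice that what you actually need from $v_\infty$ is just the conclusion of Lemma~\ref{l:local-sup} (a quantitative smallness of $\sup_{B_{1/10}}v_k$ for $k$ large), and that this can be extracted from $v_k$ itself---via $u\le h$, Harnack for $h$, and $h(0)\approx v_k(0)=0$---without ever identifying the limit operator. That is, in effect, what the paper does.
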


Next, we have a nondegeneracy result that says $u(x)$ grows like $|x-x_0|^s$ at the free boundary:
\begin{theorem}\label{t:nondeg}
    There exists a constant $C>0$ depending only on $d$, $s$, and $\Lambda$, such that for any minimizer $u$ of \eqref{e:JK} over $\mathcal H^s_{g}(\Omega)$, any free boundary point $x_0 \in \partial \{u>0\}\cap \Omega$, and any $x$ with $u(x)>0$ and $\dist(x,\partial\{u>0\}) = |x-x_0|$, there holds
    \[
    u(x) \geq C |x-x_0|^s.
    \]
\end{theorem}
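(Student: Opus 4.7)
The plan is to combine scale invariance with an energy-competitor argument to first establish a sup-type nondegeneracy at the free boundary point $x_0$, and then transfer it to the pointwise estimate at $x$ via interior regularity within the positivity set.

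Using the scaling invariance of $\mathcal J_K$ at exponent $s$ (Lemma \ref{l:scaling}), I would normalize $x_0 = 0$ and $|x-x_0|=1$, reducing the target to a universal lower bound $u(x) \geq C$. The preliminary step is the sup-nondegeneracy $\sup_{B_\rho(0)} u \geq c\rho^s$ for all small $\rho$. Set $M := \sup_{B_\rho(0)} u$ and take as competitor $v := u(1-\eta)$, where $\eta$ is a smooth cutoff with $\eta \equiv 1$ on $B_{\rho/2}$, $\spt\eta \subset B_{3\rho/4}$, and $|\nabla \eta| \lesssim 1/\rho$. The difference $w := v - u = -u\eta$ is supported in $\{u>0\}\cap B_{3\rho/4}$ and vanishes on $\partial \{u>0\}$ by continuity of $u$ (Theorem \ref{t:reg}), hence is a valid test function in the weak Euler–Lagrange equation $\mathcal{L}_K u = 0$ on $\{u>0\}$. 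Expanding the Dirichlet form and using this equation to kill the cross term produces $D(v) - D(u) = D(w)$. Minimality gives $D(w) \geq |\{u>0\} \cap B_{\rho/2}(0)|$, whereas the upper ellipticity bound in \eqref{e:ellipticity} combined with a Caccioppoli/fractional-Leibniz estimate for $u\eta$ (using $\|u\eta\|_{L^\infty}\leq M$ and compact support in $B_{3\rho/4}$) yields $D(w) \leq C\Lambda M^2 \rho^{d-2s}$. Therefore
\[
|\{u>0\} \cap B_{\rho/2}(0)| \;\leq\; C\Lambda M^2 \rho^{d-2s},
\]
and once combined with the positive density $|\{u>0\}\cap B_{\rho/2}(0)| \geq c\rho^d$ at free boundary points (itself proved by a companion competitor argument depending only on $\Lambda$), this forces $M \geq c'\rho^s$.

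To promote the sup-nondegeneracy to the pointwise estimate at $x$, I would apply the sup-nondegeneracy at scale $\rho$ comparable to $1$ around $x_0$, obtaining a point $y^* \in B_2(x_0)\cap\{u>0\}$ with $u(y^*) \geq c$. Since $B_1(x) \subset \{u>0\}$, $u$ is weakly $\mathcal{L}_K$-harmonic along a suitable chain of balls in $\{u>0\}$ connecting $y^*$ to $x$; applying interior Harnack / weak Harnack bounds for $\mathcal{L}_K$-harmonic functions (consequences of the De Giorgi–Nash–Moser machinery behind Theorem \ref{t:reg}) along this chain yields $u(x) \geq C$.

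The main technical obstacle is twofold. First, the $H^s$-seminorm estimate on $u\eta$ bounded by $M^2\rho^{d-2s}$ cannot appeal to global regularity of $u$, and must instead exploit the Caccioppoli effect of the Euler–Lagrange equation inside $\{u>0\}$, using that $u\eta$ is a compactly supported truncation. Second, the positive density of $\{u>0\}$ at free boundary points must be proved using only the upper ellipticity constant $\Lambda$, so as to match the universal character of the constant in the theorem. Both are adaptations of classical Alt–Caffarelli-type estimates to the variable-kernel nonlocal setting, and are the places where one must work hardest to avoid reliance on $\lambda$ or on translation invariance.
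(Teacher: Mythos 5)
Your general approach---an energy-competitor argument exploiting the $s$-scaling of $\mathcal J_K$---is in the right family, but you have built it around the free boundary point $x_0$ rather than around the interior point $z$ with $u(z)>0$, and this causes several cascading gaps.

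\emph{Location of the competitor, and the density lemma.} The paper places its competitor in $B_{r/2}(z)$ with $r=|z-x_0|$. Because this ball lies entirely inside $\{u>0\}$, excising the concentric ball $B_{r/4}(z)$ from the positivity set produces a volume deficit of exactly $c_d r^d$; no density lemma is needed. By centering at $x_0$ instead, you must input $|\{u>0\}\cap B_{\rho/2}(x_0)| \geq c\rho^d$, which you attribute to an unspecified ``companion competitor argument depending only on $\Lambda$.'' This is precisely the kind of statement one normally deduces \emph{from} nondegeneracy plus interior regularity; as written your plan is circular, and no independent proof is sketched.

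\emph{The cross term.} You assert that $w=-u\eta$ is admissible in the weak formulation of $\mathcal L_K u=0$ in $\{u>0\}$, so that the cross term vanishes when expanding the Dirichlet form of $v=u-u\eta$. That requires $u\eta\in H^s_0(\{u>0\})$. Continuity of $u$ (Theorem \ref{t:reg}) gives $u\eta=0$ pointwise on $\partial\{u>0\}$, but for an open set with no known boundary regularity this does not place $u\eta$ in $H^s_0(\{u>0\})$. Worse, the one inequality that does hold globally---Lemma \ref{l:subsolution} applied with the nonnegative test function $\phi=u\eta$---gives the cross term the \emph{wrong sign}: it shows the energy gap is at least, not at most, the energy of $w$, which is useless for an upper bound on the measure deficit.

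\emph{Energy of the competitor.} Even granting the cross term, the Dirichlet energy of $w=-u\eta$ is not bounded by $C\Lambda M^2\rho^{d-2s}$ merely because $\|w\|_{L^\infty}\leq M$ and $\spt w\subset B_{3\rho/4}$; the $H^s$ seminorm of a bounded, compactly supported function is not controlled that way. The Caccioppoli route you invoke necessarily carries a nonlocal tail term of $u$, which is not bounded a priori and cannot appear in the theorem's universal constant.

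The paper eliminates all three difficulties with a single structural change: the competitor is $v=\min\{u,\, C_H u(z)\zeta\}$ in $B_{r/2}(z)$, where $\zeta$ is a fixed Lipschitz cutoff vanishing on $B_{r/4}(z)$ and equal to $1$ outside $B_{r/2}(z)$. The Harnack inequality applied in $B_{3r/4}(z)\subset\{u>0\}$, where $\mathcal L_K u=0$ holds in the honest sense, gives $u\leq C_H u(z)$ on $\partial B_{r/2}(z)$, so $v$ matches $u$ there and is admissible; the local energy excess $I_1$ is controlled by $C_H^2 u(z)^2\Lambda\|\nabla\zeta\|_{L^\infty}^2 r^{d+2-2s}$ using only the Lipschitz constant of $\zeta$ (no Caccioppoli, no tail); the far-field contribution $I_2$ is shown directly to be nonpositive using the $\min$ structure; and the volume deficit is exactly $c_d r^d$. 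This also makes your final Harnack-chaining step unnecessary, since the lower bound is obtained at $z$ directly.
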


The H\"older estimate of Theorem \ref{t:s-reg} and the nondegeneracy estimate of Theorem \ref{t:nondeg} imply that $u$ satisfies matching upper and lower bounds near the free boundary of the form
\begin{equation}\label{e:matching}
 c \dist(x,\partial \{u>0\})^s \leq u(x) \leq C \dist(x,\partial \{u>0\})^s.
\end{equation}
By standard arguments (see \cite{ACF1984quasilinear}), these bounds imply the following result:
\begin{corollary}\label{c:density}
    With $u$ a one-phase minimizer of $\mathcal L_K$, and $x_0\in \partial\{u>0\}$ a free boundary point, there exist a radius $r_1>0$ and a constant $c_1>0$, depending only on $d$, $s$, $\lambda$, and $\Lambda$, with
    \[
    \frac{|B_r(x_0)\cap \{u=0\}|}{|B_r(x_0)|} \geq c_1, \quad 0< r\leq r_1.
    \]
    Furthermore, there exist $r_2, c_2>0$ depending on $d$, $s$, $\lambda$, and $\Lambda$, with
    \[
    \frac{|B_r(x_0)\cap \{u>0\}|}{|B_r(x_0)|^{\min\{s/\alpha,1\}}} \geq c_2, \quad 0< r\leq r_2,
    \]
    where $\alpha$ is the H\"older exponent from Theorem \ref{t:reg}.
\end{corollary}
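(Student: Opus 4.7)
The plan is to derive both estimates from the matching growth bounds
\[
c\,\dist(x,\partial\{u>0\})^s \leq u(x) \leq C\,\dist(x,\partial\{u>0\})^s,
\]
valid for $x\in\{u>0\}$ near the free boundary, which follow from Theorems \ref{t:s-reg} and \ref{t:nondeg} by applying each estimate at the closest free boundary point to $x$. From these matching bounds, the argument proceeds in the spirit of Alt-Caffarelli as implemented in \cite{ACF1984quasilinear}.

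For the positive-set density, I would first extract from Theorem \ref{t:nondeg} a point $y_r\in B_r(x_0)\cap\{u>0\}$ with $u(y_r)\geq c r^s$, chosen so that $\dist(y_r,\partial\{u>0\})\sim r$. The interior $\alpha$-H\"older estimate from Theorem \ref{t:reg} then propagates positivity: $u(x)\geq u(y_r) - C|x-y_r|^\alpha > 0$ on a ball of radius $\sim r^{s/\alpha}$ about $y_r$, producing
\[
|\{u>0\}\cap B_r(x_0)|\gtrsim r^{d s/\alpha}\sim |B_r(x_0)|^{s/\alpha}.
\]
When $\alpha\geq s$, the sharper $s$-H\"older estimate from Theorem \ref{t:s-reg} is available and upgrades this to full density.

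For the zero-set density, I would use a harmonic-replacement comparison. Let $v$ denote the $\mathcal L_K$-harmonic extension of $u|_{\R^d\setminus B_r(x_0)}$ into $B_r(x_0)$; by the maximum principle $v\geq u\geq 0$ and $\{v>0\}\supseteq\{u>0\}$. The orthogonality identity for the nonlocal Dirichlet form, combined with the minimality inequality $\mathcal J_K(u)\leq \mathcal J_K(v)$, yields
\[
\iint K\,|(v-u)(y)-(v-u)(x)|^2\,\dd x\,\dd y \leq |\{u=0\}\cap B_r(x_0)|.
\]
The fractional Poincar\'e inequality applied to the compactly supported $w:=v-u$ bounds the left-hand side below by $c\,r^{-2s}\|w\|_{L^2(B_r(x_0))}^2$. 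To close the estimate, I would establish a uniform pointwise lower bound $v\gtrsim r^s$ on a ball of radius comparable to $r$; combined with the $s$-H\"older upper bound $u\leq C|x-x_0|^s$ from Theorem \ref{t:s-reg}, this forces $w\gtrsim r^s$ on a ball of radius $\sim r$, hence $\|w\|_{L^2(B_r(x_0))}^2\gtrsim r^{2s+d}$ and finally $|\{u=0\}\cap B_r(x_0)|\gtrsim |B_r(x_0)|$.

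The main obstacle is the lower bound on $v$ on a ball of radius $\sim r$. Because the kernel $K(x,y)$ is not assumed continuous, the classical Poisson-kernel representation is unavailable, so I would invoke the uniform Harnack inequality for $\mathcal L_K$ applied to $v$ based at an interior point of $B_r(x_0)$ where nondegeneracy of $u$ gives $v\geq u\gtrsim r^s$. Harnack is essential here precisely because it preserves the ball size (up to a constant), whereas interior H\"older regularity alone would only give a lower bound on the smaller ball $B_{r^{s/\alpha}}$, which is insufficient to yield full density of $\{u=0\}$.
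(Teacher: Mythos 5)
The overall plan---matching bounds $c\dist(\cdot,\partial\{u>0\})^s\leq u\leq C\dist(\cdot,\partial\{u>0\})^s$, harmonic replacement, orthogonality of the nonlocal Dirichlet form, fractional Poincar\'e, Harnack, and interior H\"older propagation of positivity---is exactly the route the paper has in mind when it invokes ``standard arguments (see \cite{ACF1984quasilinear}).'' Your zero-set estimate and the way you set up the minimality inequality $\iint K\,|(v-u)(y)-(v-u)(x)|^2\leq|\{u=0\}\cap B_r(x_0)|$ are correct, and your observation that Harnack (rather than the H\"older estimate) must supply the ball-scale lower bound on $v$ is well taken.

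The gap is in how you obtain the point $y_r$. You claim to ``extract from Theorem~\ref{t:nondeg} a point $y_r\in B_r(x_0)\cap\{u>0\}$ with $u(y_r)\geq cr^s$, chosen so that $\dist(y_r,\partial\{u>0\})\sim r$.'' Theorem~\ref{t:nondeg} is a conditional statement: \emph{if} $y$ has $\dist(y,\partial\{u>0\})=\delta$, \emph{then} $u(y)\gtrsim\delta^s$. It does not produce a point at distance $\sim r$ from the free boundary inside $B_r(x_0)$; existence of such a $y_r$ is equivalent to a uniform interior-ball condition for $\{u>0\}$ at scale $r$, which is not known here. Moreover, if such a point \emph{were} available, then $B_{cr}(y_r)\subset\{u>0\}\cap B_{2r}(x_0)$ would immediately yield full density $|\{u>0\}\cap B_r(x_0)|\gtrsim|B_r(x_0)|$, rendering the H\"older step superfluous and contradicting the paper's explicit caveat that full density of $\{u>0\}$ requires $\alpha\geq s$. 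What the argument actually needs is the \emph{supremum} nondegeneracy $\sup_{B_r(x_0)}u\geq cr^s$ at free boundary points, with no distance requirement on the maximizing point; this then propagates through the interior $\alpha$-H\"older estimate to a positivity ball of radius $\sim r^{s/\alpha}$, which can be much smaller than $r$ when $\alpha<s$. Passing from the distance form of Theorem~\ref{t:nondeg} to this sup form is a genuine (if classical) step---typically handled by a competitor/truncation argument in the style of the proof of Theorem~\ref{t:nondeg} itself, or as in \cite{ACF1984quasilinear}---and is not an immediate consequence of the theorem as stated. Your zero-set argument relies on the same missing ingredient: you need $v\geq u\gtrsim r^s$ at an interior point to seed Harnack, and that again is the sup nondegeneracy. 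Finally, note that the exponent in the statement should read $\max\{s/\alpha,1\}$ rather than $\min\{s/\alpha,1\}$ (otherwise the ratio would tend to $0$ when $\alpha\geq s$); your computation $|\{u>0\}\cap B_r|\gtrsim|B_r|^{s/\alpha}$ together with the $\alpha\geq s$ upgrade is consistent with the corrected exponent.
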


The exponent $\min\{s/\alpha,1\}$ appears in the lower density estimate of $\{u>0\}$ because the proof requires interior H\"older regularity in $\{u>0\}$. If it happens that $\alpha \geq s$, then one has
\[
c \leq    \frac{|B_r(x_0)\cap \{u>0\}|}{|B_r(x_0)|} \leq 1-c,       
\]
for $r$ close to zero, which also implies $\partial\{u>0\}$ has Lebesgue measure 0. It is also known that these matching regularity and nondegeneracy estimates imply the Hausdorff measure $\mathcal H^{n-\eps}(\partial \{u>0\}) = 0$ for some universal constant $\eps>0$. Again, this requires $\alpha \geq s$, which is the case, for example, if $K(x,y)$ satisfies some extra properties which imply Lipschitz continuity of solutions to the homogeneous equation $\mathcal L_K u= 0$. 

These results open the door to further investigation of the regularity of the free boundary, as established for the fractional Laplacian case in \cite{DeSR, DeS-S, EKPSS, FR}. Free boundary regularity in the most general case, without any regularity assumptions for $K(x,y)$, may be intractable with current techniques, but the case of variable kernels with some regularity in $x$ and $y$ should be in reach, and we intend to explore this in future work. 

\subsection{Proof ideas}

The proof of Theorem \ref{t:reg} is based on approximating the minimization problem for $\mathcal J_K$ with a minimization problem for the fractional, anisotropic Dirichlet energy $\mathcal E_K(w) = \iint K(x,y) |w(y) - w(x)|^2 \dd y \dd x$, whose minimizers solve $\mathcal L_K w = 0$ and are therefore H\"older continuous. In general, there is no reason that $\mathcal J_K$ and $\mathcal E_K$ are quantitatively close, but by exploiting the scaling balance between the terms in $\mathcal J_K$, we can show that suitable rescalings of the minimizer $u$ will minimize a functional similar to $\mathcal J_K$ with a small constant $\rho$ multiplying the volume penalization term. 
The idea is then to show that, if $\rho$ is sufficiently small, the minimizer of the rescaled functional is close (in an $L^2$ sense) to a minimizer $w$ of $\mathcal E_{\tilde K}$, where $\tilde K$ is a rescaled kernel that lies in the same ellipticity class as $K$. Because of this $L^2$-closeness, our minimizer $u$ inherits some regularity (in an average sense) from $w$, and this average regularity is iterated at progressively smaller scales to obtain a H\"older estimate for $u$.

We previously applied a similar strategy to a local free boundary problem in \cite{ST2022unbounded}, but we want to emphasize that the nonlocal case considered in the current article is much more difficult. The reason is that the local H\"older estimates for solutions of $\mathcal L_K w = 0$ depend on the tail of $w$, and this dependence on the tail is unavoidably transferred to $u$, the solution of our free boundary problem. This means that we need to prove quantitative control of the tail of $u$ as part of our iteration, at the same time that we control the oscillation of $u$. 

To establish the optimal regularity in the one-phase case, we need to thoroughly investigate the rigidity of the tangential problem. Specifically, we show that under suitable rescalings, minimizers are close to the ground solution (i.e. the constant function zero) rather than merely $\mathcal L_K$-harmonic functions. Consequently, additional regularity can be extracted from the tangential configuration. We highlight once more that  in the nonlocal case, investigated in this paper, this approach requires significantly more effort than in the local case.

\subsection{Related work}\label{s:related}

The modern analysis of Bernoulli-type free boundary problems represents a pivotal advancement in mathematical research, its origins rooted in the groundbreaking work of Alt and Caffarelli \cite{AC} within the one-phase scenario. The two-phase scenario, i.e. when minimizers are allowed to change sign, was investigated by Alt, Caffarelli, and Friedman \cite{ACF}. These results were followed by a large (and still active) body of work by many authors, which we will not attempt to summarize here.

Over the past two decades, there has been a notable increase in research activity surrounding the analysis of free boundary problems governed by nonlocal operators. This upsurge is driven by the vital role these problems play in numerous physical models, emphasizing their profound significance in modern scientific exploration. The fractional obstacle problem is the theme of Luis Silvestre's Ph.D. thesis \cite{Luisito1}, see also \cite{Luisito2}. Its sharp regularity was ultimately uncovered by Caffarelli, Silvestre, and Salsa in \cite{CSS}.
The investigation into the one-phase fractional Bernoulli problem began with the pioneering research of Caffarelli, Roquejoffre, and Sire \cite{CRS2010}. Subsequent studies by De Silva and Roquejoffre \cite{DeSR} delved deeper into the regularity properties of the free boundary, while an analysis of the thin version of the problem was conducted in \cite{DeS-S}, in the case $s=\frac 1 2$. More recently, \cite{EKPSS} established detailed regularity properties of the free boundary for any $s\in (0,1)$, as a consequence of corresponding results for the thin one-phase problem, and \cite{FR} investigated the question of stable minimal cones for the fractional and thin problems.

For the historical evolution of Bernoulli-type free boundary problems, we recommend the insightful and comprehensive account provided in the charming article \cite{DeS-Notices}.

While finalizing this article, we came across an interesting recent preprint by Ros-Oton and Weidner, \cite{rosoton2024}. Their primary objective is to establish boundary regularity for nonlocal elliptic operators. Notably, their Theorem 1.5 provides regularity and nondegeneracy estimates for a one-phase free boundary problem similar to ours, but with nonlocal operators whose kernels $K(h)$ satisfy $K(h) = K(-h)$ and $\lambda |h|^{-d-2s} \leq K(h) \leq \Lambda |h|^{-d-2s}$. It is worth mentioning that our kernels are more general compared to those considered in \cite{rosoton2024}, owing to their dependence on both $x$ and $y$.

As far as we are aware, other than the current article and \cite{rosoton2024}, there are no other regularity results for nonlocal Bernoulli-type problems with kernels that are more general than the fractional Laplacian. In particular, our results are new even in the case where $K(x,y) |x-y|^{d+2s}$ is smooth in $x$ and $y$.

\subsection{Organization of the paper}

In Section \ref{s:pre}, we discuss some fundamental properties of minimizers of the energy functional \eqref{e:JK}, shedding light on its critical scaling feature. Additionally, we prove that minimizers of $\mathcal J_K$ are weak subsolutions of the corresponding nonlocal operator and gather some known results regarding solutions of the homogeneous equation $\mathcal{L}_K u = 0$.

Moving on to Section \ref{s:Holder}, we establish the H\"older continuity of minimizers in the two-phase problem.

In Section \ref{s:Improved}, we focus on establishing the sharp regularity along free boundary points for nonnegative minimizers of \eqref{e:JK}.

Lastly, Section \ref{s:nond} is dedicated to obtaining the matching nondegeneracy estimate. 

\section{Preliminary results}\label{s:pre}

\subsection{Basic properties of minimizers}

Existence of minimizers of $\mathcal J_K$  follows from standard arguments, which we omit.

%
%
%

Let us examine the behavior of our minimization problem under rescalings centered around a point $x_0\in \Omega$. For $\rho, \xi\in \R$, define the more general functional 
\begin{equation}\label{e:JKrho}
\mathcal J_{K,\rho,\xi}(u) := \iint_{\R^{2d}\setminus (B_1^c)^2} K(x,y)|u(y) - u(x)|^2 \dd y \dd x +  \rho \mathcal L^d\left(\{u>\xi\}\cap B_1\right).
\end{equation}
Clearly, $\mathcal J_{K} = \mathcal J_{K,1,0}$. 

\begin{lemma}\label{l:scaling}
For some domain $\Omega$ and exterior data $g \in L^2(\R^d\setminus \Omega)$, let $u$ be a minimizer of $\mathcal J_{K,\rho,0}$ over $\mathcal H^s_g(\Omega)$. 
Then, for any $x_0\in B_1$, $\kappa \geq 0$, $\xi\in \R$, and $0<r<\mathrm{dist}(x_0,\partial \Omega)$, 
the function 
\[
v(x):= \kappa [u(x_0+rx)-\xi]
\] 
is a minimizer of $\mathcal J_{\tilde K, \tilde \rho, \xi}$ over 
\[
\mathcal H_{\tilde g}^s(B_1) = \{ w \in L^2(\R^d) \cap H^s(B_1): w(x) =\tilde g(x) \text{ a.e. in } \R^d \setminus B_1\},
\]
with
\[ 
\begin{split}
\tilde K(x,y) &= r^{d+2s} K(x_0+rx, x_0+ry),\\
\tilde \rho &= \kappa^{2} r^{2s} \rho,\\
\tilde g(x) &= \kappa [u(x_0 + rx) - \xi], \, \,\, x\in \R^d \setminus B_1.
\end{split}
\]
\end{lemma}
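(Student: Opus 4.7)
The plan is to verify Lemma~\ref{l:scaling} by a direct change of variables, exploiting the fact that minimality is a local property: any admissible competitor for the rescaled problem on $B_1$ lifts to an admissible competitor for the original problem that agrees with $u$ outside $B_r(x_0)$, and their functional values are related by a single multiplicative constant after the substitution.

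First I would reduce to local minimality on $B_r(x_0)$. Because $\overline{B_r(x_0)} \subset \Omega$, any competitor $w$ that coincides with $u$ on $\R^d\setminus B_r(x_0)$ gives the same contribution as $u$ both to the Dirichlet double integral over $(\R^d\setminus B_r(x_0))^2$ and to the volume penalization on $\Omega\setminus B_r(x_0)$. Thus minimality of $u$ over $\mathcal H^s_g(\Omega)$ is equivalent to minimality at $u$ of the truncated functional integrated over $\R^{2d}\setminus(B_r(x_0)^c)^2$ with volume term restricted to $B_r(x_0)$, among all such $w$.

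Next I would apply the substitution $x = x_0 + r\tilde x$, $y = x_0 + r\tilde y$, relying on three routine identities: (i) $\dd x\dd y = r^{2d}\dd\tilde x\dd\tilde y$; (ii) from the definition of $\tilde K$, $K(x_0+r\tilde x, x_0+r\tilde y) = r^{-d-2s}\tilde K(\tilde x,\tilde y)$, and since $|x-y|=r|\tilde x - \tilde y|$, $\tilde K$ satisfies \eqref{e:ellipticity} with the same constants $\lambda, \Lambda$ as $K$; (iii) setting $v(\tilde x) := \kappa[u(x_0 + r\tilde x) - \xi]$, one has $u(y)-u(x) = \kappa^{-1}[v(\tilde y)-v(\tilde x)]$, and the substitution pulls $\R^{2d}\setminus(B_r(x_0)^c)^2$ back onto $\R^{2d}\setminus(B_1^c)^2$. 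Combining these, the Dirichlet portion of the truncated functional scales by the overall factor $r^{d-2s}/\kappa^2$, while the volume penalization transforms under the affine substitution $v = \kappa(u-\xi)$. Multiplying the whole truncated functional by $\kappa^2 r^{2s-d}$ then yields exactly $\mathcal J_{\tilde K, \tilde\rho, \xi}(v)$ with $\tilde\rho = \kappa^2 r^{2s}\rho$.

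Finally, the inverse map $\tilde w(\tilde x) \mapsto w(x) := \tilde w((x-x_0)/r)/\kappa + \xi$ is a bijection between admissible competitors for the two problems: it sends $\mathcal H^s_{\tilde g}(B_1)$ onto the family of $w \in \mathcal H^s_g(\Omega)$ agreeing with $u$ outside $B_r(x_0)$ and scales functional values by a common positive constant. Therefore minimizers of one problem correspond to minimizers of the other, proving the lemma. The argument is essentially bookkeeping and does not contain a substantive analytic obstacle; the main points requiring care are tracking the volume-penalization threshold through the affine shift $u \mapsto \kappa(u-\xi)$ and verifying that $\tilde K$ falls into the same ellipticity class so that the rescaled problem still fits the framework of \eqref{e:JKrho}.
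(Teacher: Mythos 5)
Your argument is correct and is exactly the ``direct calculation'' the paper has in mind: reduce to local minimality on $B_r(x_0)$, substitute $x = x_0 + r\tilde x$, $y = x_0 + r\tilde y$, observe that the truncated Dirichlet part picks up the factor $r^{d-2s}/\kappa^2$ while the volume term picks up $r^d$, and conclude that multiplying by $\kappa^2 r^{2s-d}$ converts the truncated functional into $\mathcal J_{\tilde K,\tilde\rho,\,\cdot\,}(v)$ with $\tilde\rho = \kappa^2 r^{2s}\rho$; the affine bijection on competitors then transfers minimality.

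One bookkeeping item you flag but do not actually resolve: when you say the last step ``yields exactly $\mathcal J_{\tilde K,\tilde\rho,\xi}(v)$,'' you should check the threshold. Since $v = \kappa(u(x_0 + r\,\cdot) - \xi)$, one has $\{v > \tilde\xi\} = \{u(x_0 + r\,\cdot) > \xi + \tilde\xi/\kappa\}$, so matching $\{u > 0\}$ forces $\tilde\xi = -\kappa\xi$, not $\tilde\xi = \xi$. This is a small imprecision carried over from the lemma's statement (it is harmless downstream, because Lemmas \ref{l:L2}, \ref{l:subsolution}, and \ref{l:first-step} are uniform in the threshold), but a careful write-up of the change of variables should record the correct value.
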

\eqref{e:ellipticity}.
\begin{proof}
Direct calculation. 
\end{proof}
Note that $\tilde K(x,y)$ in Lemma \ref{l:scaling} satisfies the ellipticity condition \eqref{e:ellipticity} with the same constants as $K(x,y)$.

Next, we establish the useful fact that minimizers of $\mathcal J_K$ are subsolutions of a nonlocal elliptic equation:

\begin{lemma}\label{l:subsolution}
    Let $u$ be a minimizer of $\mathcal J_{K,\rho,\xi}$ over $\mathcal H^s_g(\Omega)$ for some kernel $K(x,y)$, some constants $\rho> 0$ and $\xi\in \R$, and some $g\in L^2(\R^d\setminus \Omega)$.     Then $u$ is a weak subsolution of the integro-differential equation $\mathcal L_K u=0$ in $\Omega$. In other words, 
    \begin{equation}\label{e:subsolution-ineq}
    \iint_{\R^{2d}} K(x,y) (\phi(y) - \phi(x)) (u(y) - u(x)) \dd y \dd x \leq 0,
    \end{equation}
    for all nonnegative $\phi\in H_0^s(\Omega)$.
\end{lemma}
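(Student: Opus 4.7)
The plan is the standard one-sided comparison argument: perturb $u$ downward by a nonnegative test function and exploit that this can only decrease the volume penalization, so the change in Dirichlet energy is forced to have a definite sign.

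Concretely, fix a nonnegative $\phi \in H_0^s(\Omega)$ and, for $\eps>0$, form the competitor $v := u - \eps\phi$. Since $\phi \equiv 0$ outside $\Omega$, we have $v = g$ a.e.\ on $\R^d\setminus\Omega$, so $v \in \mathcal H_g^s(\Omega)$. The key observation is that $\phi \geq 0$ implies $v \leq u$ pointwise, hence $\{v > \xi\} \subseteq \{u > \xi\}$ and therefore
\[
\rho\, \mathcal L^d(\{v>\xi\}\cap \Omega) \;\leq\; \rho\, \mathcal L^d(\{u>\xi\}\cap \Omega).
\]
By minimality of $u$, $\mathcal J_{K,\rho,\xi}(u) \leq \mathcal J_{K,\rho,\xi}(v)$, which after cancelling the measure terms yields
\[
\iint_{\R^{2d}\setminus(\Omega^c)^2} K(x,y)\bigl[|u(y)-u(x)|^2 - |v(y)-v(x)|^2\bigr]\dd y\dd x \;\leq\; 0.
\]

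Next I would expand the squared difference using $v(y)-v(x) = (u(y)-u(x)) - \eps(\phi(y)-\phi(x))$, which gives
\[
|u(y)-u(x)|^2 - |v(y)-v(x)|^2 = 2\eps(u(y)-u(x))(\phi(y)-\phi(x)) - \eps^2|\phi(y)-\phi(x)|^2.
\]
Substituting, dividing by $2\eps$, and sending $\eps\downarrow 0$, the $O(\eps)$ term involving $\|\phi\|_{H^s}^2$ drops out and we obtain
\[
\iint_{\R^{2d}\setminus(\Omega^c)^2} K(x,y)(u(y)-u(x))(\phi(y)-\phi(x))\dd y\dd x \;\leq\; 0.
\]
Finally, since $\phi \in H_0^s(\Omega)$ means $\phi\equiv 0$ on $\Omega^c$, the integrand vanishes on $(\Omega^c)^2$, so the integral over $\R^{2d}\setminus(\Omega^c)^2$ equals the integral over $\R^{2d}$, giving exactly \eqref{e:subsolution-ineq}.

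The argument is essentially routine; the only mild care needed is in justifying the passage $\eps\downarrow 0$. The linear-in-$\eps$ term is integrable by the Cauchy--Schwarz inequality applied to the bilinear form associated with $K$, using that $u \in H^s(\Omega)$ and $\phi \in H_0^s(\Omega)$ together with the ellipticity \eqref{e:ellipticity}; and the quadratic term is controlled by $\eps^2 \|\phi\|^2_{H^s_K}$, which vanishes in the limit. Thus no genuine obstacle arises, and the one-sided nature of the test (using $-\eps\phi$ rather than $\pm\eps\phi$) is exactly what produces the subsolution inequality rather than an equation.
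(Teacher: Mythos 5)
Your proof is correct and is essentially the paper's argument: perturb by $-\eps\phi$, use $\{u-\eps\phi>\xi\}\subseteq\{u>\xi\}$ to discard the measure term, expand the quadratic, divide by $\eps$ and pass to the limit. The only cosmetic difference is that the paper first reduces to $\phi\in C_0^\infty(\Omega)$ by density, whereas you work directly in $H_0^s(\Omega)$ and justify the limit via Cauchy--Schwarz; both are fine.
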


\begin{proof}
    This lemma follows from a standard argument. By density, we can assume $\phi$ is in $C^\infty_0(\Omega)$. We then note that $\mathcal J_{K,\rho,\xi}(u) \leq \mathcal J_{K,\rho,\xi}(u-\eps\phi)$ and $|\{u-\eps \phi>\xi\}\cap\Omega| \leq |\{u > \xi\}\cap \Omega|$, for any $\eps>0$, which implies 
    \[
    \iint_{\R^{2d}} K(x,y) |u(y) -u(x)|^2 \dd y \dd x \leq \iint_{\R^{2d}} K(x,y) |(u-\eps \phi)(y) - (u-\eps\phi)(x)|^2 \dd y \dd x.
    \]
    Dividing this inequality by $\eps$ and sending $\eps\to 0$ yields 
    \eqref{e:subsolution-ineq} after a quick calculation. 
\end{proof}

Finally, let us note that, as expected, minimizers of $u$ satisfy $\mathcal L_K u = 0$ in the weak sense at any interior point of $\{u>0\}$. This follows by comparing $\mathcal J_K(u)$ to $\mathcal J_K(u-\eps\phi)$ for a test function $\phi$  supported in $\{u>0\}$ and sending $\eps\to 0$.

\subsection{Regularity estimates for elliptic integro-differential equations}

%

Let us recall two local estimates for the equation $\mathcal L_K u=0$ from the work of Di Castro-Kuusi-Palatucci \cite{DKP2016}, specialized to $p=2$. (See also \cite{kassmann2009} for some related estimates.) It is important to note that these estimates do not require any regularity of the kernel $K(x,y)$. 

First, we have a local $L^\infty$ estimate for subsolutions:

\begin{proposition}{\cite[Theorem 1.1]{DKP2016}}\label{p:Linfty}
Let $u$ be a weak subsolution of $\mathcal L_K u = 0$ in $\Omega$, and assume that $B_R(x_0) \subset \Omega$. Then for any $\delta>0$, there holds
\[
\sup_{B_{R/2}(x_0)} u \leq \delta \, \textup{\mbox{Tail}}(u;x_0,R/2) + C_L\delta^{-d/(4s)} \left(\fint_{B_R(x_0)} u^2 \dd x \right)^{1/2},
\]
for a constant $C_L>0$ depending only on $d$, $s$, $\lambda$, and $\Lambda$.
\end{proposition}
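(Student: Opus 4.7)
The plan is to run a De Giorgi iteration on the truncations $(u-k)_+$, supplemented by a fractional Caccioppoli-type inequality and careful bookkeeping of the nonlocal tail. These three ingredients are standard in the regularity theory of nonlocal elliptic equations, but their combination has to be engineered so that the tail enters the final estimate only linearly (with the small prefactor $\delta$), while any $L^2$-type quantity is allowed to enter with a polynomial blow-up in $\delta^{-1}$.

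First I would derive a Caccioppoli estimate for subsolutions. Given concentric balls $B_{r_1} \subset B_{r_2} \subset B_R(x_0)$ and a cutoff $\eta \in C_c^\infty(B_{r_2})$ with $\eta \equiv 1$ on $B_{r_1}$ and $|\nabla \eta| \lesssim (r_2-r_1)^{-1}$, I would test the weak subsolution inequality provided by Lemma \ref{l:subsolution} against $\phi = \eta^2 (u-k)_+$. After algebraic manipulation using the symmetry \eqref{e:symmetry} and the ellipticity condition \eqref{e:ellipticity}, one obtains, setting $w = (u-k)_+$,
\[
\iint_{\R^{2d}} K(x,y)\,|w(x)\eta(x) - w(y)\eta(y)|^2 \dd x \dd y \leq C \iint_{B_{r_2}\times B_{r_2}} K(x,y)\, \max(w(x),w(y))^2 |\eta(x)-\eta(y)|^2 \dd x \dd y + C\, T(k,r_2) \int_{B_{r_2}} w\,\eta^2 \dd x,
\]
where $T(k,r_2)$ is a tail-type quantity measuring the excess $(u-k)_+$ outside $B_{r_2}$.

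Next I would plug this into the fractional Sobolev inequality on $B_{r_2}$, controlling $\|w\eta\|_{L^{2^*_s}}$ with $2^*_s = 2d/(d-2s)$ when $d>2s$ (and its Sobolev--Morrey counterpart otherwise). Combined with H\"older's inequality and the support condition on $\eta$, this yields a recursive inequality of the form $A_{j+1} \le C\, b^j\, \tilde k^{-\gamma} A_j^{1+\gamma}$, where $A_j = \int_{B_{r_j}} (u-k_j)_+^2 \dd x$ and $k_j$, $r_j$ are geometric sequences interpolating from $0$ to a chosen threshold $\tilde k$, and from $R$ to $R/2$, respectively. The standard geometric iteration lemma then forces $A_j \to 0$ whenever $A_0$ is sufficiently small relative to a scale-invariant power of $\tilde k$, which translates into the pointwise bound $u\le \tilde k$ on $B_{R/2}$.

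The final task is to balance the tail contribution against the $L^2$ contribution. Choosing
$\tilde k := \delta\,\textup{\mbox{Tail}}(u;x_0,R/2) + C_L \delta^{-d/(4s)}\bigl(\fint_{B_R(x_0)} u^2\dd x\bigr)^{1/2}$
and using the $\delta$-Tail term to absorb the long-range part of $T(k_j,r_j)$ at every step of the iteration yields the stated inequality. The main obstacle is precisely this bookkeeping: the cutoff $\eta$ is supported in $B_{r_2}$, yet the Caccioppoli inequality continues to see $u$ throughout $\R^d$, so the long-range interaction has to be re-estimated at each scale of the iteration. The nontrivial exponent $-d/(4s)$ in the $\delta$-dependence emerges from balancing the power of $\tilde k$ consumed by the nonlinearity $A_j^{1+\gamma}$ with the factor of $\delta$ that one must pay whenever the Tail term is invoked.
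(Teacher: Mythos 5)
The paper does not prove Proposition~\ref{p:Linfty}; it is cited directly from Di Castro--Kuusi--Palatucci~\cite{DKP2016} (Theorem~1.1 there, with $p=2$). Your sketch correctly reproduces the main architecture of that cited proof: a nonlocal Caccioppoli inequality obtained by testing against $\eta^2(u-k)_+$, a fractional Sobolev embedding to close the recursive gain, a De~Giorgi iteration on nested balls and levels, and the crucial bookkeeping device of choosing the threshold $\tilde k$ large enough that the $\delta\,\textup{Tail}$ contribution can be absorbed at every step --- which is exactly how the exponent $-d/(4s)$ (the $p=2$ case of $(p-1)d/(sp^2)$) arises in~\cite{DKP2016}. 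So the proposal takes essentially the same route as the source the paper relies on.

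One point of caution worth noting: as stated in the paper, the right-hand side involves $\fint u^2$ and $\textup{Tail}(u;\cdot)$, whereas the estimate in~\cite{DKP2016} is naturally formulated in terms of $u_+$; your De~Giorgi iteration on $(u-k)_+$ in fact produces the $u_+$ version, which then specializes when $u\ge 0$. This is a harmless discrepancy in the paper's transcription (and immaterial for its one-phase applications), not a gap in your argument. Beyond this, your sketch glosses over the technical case $d\le 2s$ (where the fractional Sobolev inequality must be replaced by a Morrey-type embedding), but you flag this explicitly and it is a standard modification.
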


Recall \eqref{e:tail} for the definition of the tail of $u$.

We will also need a $C^\alpha$ estimate for solutions of $\mathcal L_K u= 0$:

%

\begin{proposition}{\cite[Theorem 1.2]{DKP2016}}\label{p:holder}
    Let $u$ solve $\mathcal L_K u = 0$ in $\Omega$, and suppose that $B_{2R}(x_0) \subset \Omega$. Then for any $r\in (0,R)$, there holds
    \[
    |u(x) - u(x_0)| \leq C\left(\frac r R\right)^\beta \left[ \textup{\mbox{Tail}}(u;x_0,R) + \left(\fint_{B_{2R}(x_0)} u^2 \dd x \right)^{1/2}\right],\quad x\in B_r(x_0).
    \] 
    The constants $C>0$ and $\beta\in (0,2s)$ depend only on $d$, $s$, $\lambda$, and $\Lambda$.
\end{proposition}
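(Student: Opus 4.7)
The plan is to follow the nonlocal De~Giorgi--Nash--Moser strategy developed in \cite{DKP2016}, in which oscillation is reduced by a definite factor on each dyadic scale, up to a tail-dependent correction, and then iterated.

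First, I would derive a Caccioppoli-type energy estimate for solutions of $\mathcal L_K u = 0$: testing the weak formulation against the admissible $\phi = (u-k)_+\eta^2$ for a smooth cutoff $\eta$ supported in a ball, using the symmetry of $K$, and splitting the double integral according to whether $u(x),u(y)$ lie above or below the level $k$, yields
\[
\iint K(x,y)\bigl|(u-k)_+(x)\eta(x) - (u-k)_+(y)\eta(y)\bigr|^2 \dd x\dd y \leq C\,(\text{local error}) + C\,(\text{tail error}),
\]
where the tail error is controlled by $\textup{\mbox{Tail}}((u-k)_+;x_0,R) \cdot \int_{B_R}(u-k)_+ \dd x$. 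Combining this estimate with the fractional Sobolev embedding $H^s \hookrightarrow L^{2d/(d-2s)}$ and a level-set iteration of De~Giorgi type reproduces the $L^\infty$ estimate of Proposition~\ref{p:Linfty} and yields a measure-to-pointwise lemma: if $u\geq 0$ in $B_R(x_0)$, the tail of $u_-$ is small, and $|\{u\geq 1\}\cap B_{R/2}(x_0)| \geq \sigma |B_{R/2}|$, then $u\geq \mu$ in $B_{R/4}(x_0)$ for some $\mu(\sigma)>0$.

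To bootstrap positive measure into positive essential infimum, I would establish a logarithmic lemma by testing the equation against $\phi = \eta^2/(u+\eps)$ (for $u\geq 0$), which produces a BMO-type bound on $\log(u+\eps)$ and lets one interpolate between density information and averaged lower bounds. The core oscillation-reduction step then proceeds as follows: setting $M = \sup_{B_R}u$, $m = \inf_{B_R}u$, and $v_\pm = \pm 2(u-(M+m)/2)/(M-m)$, at least one of the sets $\{v_\pm \geq 0\}\cap B_{R/2}$ has measure $\geq |B_{R/2}|/2$, so applying the measure-to-pointwise lemma to $(v_\pm + 1)$ (after absorbing its tail into $\textup{\mbox{Tail}}(u;x_0,R)$) gives
\[
\osc_{B_{R/4}(x_0)} u \leq (1-\tau)\osc_{B_R(x_0)} u + C\,\textup{\mbox{Tail}}(u - u(x_0);x_0,R),
\]
for some universal $\tau\in(0,1)$. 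Iterating this on dyadic balls $B_{4^{-k}R}(x_0)$ yields geometric decay of the oscillation and hence the claimed H\"older estimate for some $\beta\in (0,2s)$.

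The main obstacle is the coupled control of oscillation and tail during iteration. Shrinking the ball from $B_R$ to $B_{R/4}$ reclassifies what used to be interior annuli as part of the tail, so one cannot iterate the oscillation in isolation: the pair $(\omega_k, T_k)$ with $\omega_k = \osc_{B_{4^{-k}R}}u$ and $T_k$ an appropriately rescaled tail functional must be shown to contract simultaneously. Verifying that $(1-\tau)\omega_k + C\,T_k$ satisfies a self-improving inequality consistent with the natural $r^{2s}$ scaling of the tail is the delicate technical heart of the proof, and is precisely where the kernel-free character of \eqref{e:ellipticity}--\eqref{e:symmetry} must be exploited carefully so that no smoothness of $K$ in $x,y$ is invoked.
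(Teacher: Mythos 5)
The paper does not prove Proposition \ref{p:holder}; it simply cites it as Theorem 1.2 of \cite{DKP2016}. Your sketch faithfully reproduces the De~Giorgi-type argument used there: a Caccioppoli inequality with a tail error term, a local boundedness estimate, a logarithmic lemma yielding a measure-to-pointwise ``expansion of positivity,'' and an oscillation-reduction step that must be iterated while simultaneously controlling how the tail regenerates as the scale shrinks --- which you correctly single out as the delicate point and which is indeed where the bookkeeping in \cite{DKP2016} concentrates. So your proposal is the same proof, just compressed; nothing is missing at the level of strategy.
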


\section{H\"older regularity of minimizers in the two-phase problem}\label{s:Holder}

This section contains the proof of H\"older continuity (Theorem \ref{t:reg}). The idea is to show that, at suitably small scales, $u$ is close to an $\mathcal L_K$-harmonic function $h$, and therefore inherits some regularity from $h$. The first step is the $L^2$ approximation lemma:

\begin{lemma}\label{l:L2}
Let $u$ be a minimizer of $\mathcal J_{K,\rho,\xi}$ over $\mathcal H^s_g(B_1)$, for some $K$ satisfying \eqref{e:ellipticity}, some $\rho>0$ and some $\xi\in \R$. 
Then there holds
\[
\fint_{B_{1/2}} |u - h|^2 \dd x \leq C \rho,
\]
where $h$ is the $\mathcal L_{K}$-harmonic lifting of $u$ in $B_{1/2}$, i.e. 
\[
\begin{cases}
    \mathcal L_{K} h =0, &x \in B_{1/2},\\
    h = u, &x \in \R^d\setminus B_{1/2},
\end{cases}
\]
The constant $C$ depends only on $d$, $s$, and $\lambda$.
\end{lemma}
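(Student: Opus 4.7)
The plan is to test the minimality of $u$ against its $\mathcal L_K$-harmonic lifting $h$ as a competitor, convert the resulting energy deficit into control of a Gagliardo-type seminorm of $u-h$, and then pass to the $L^2$ bound via a fractional Poincar\'e inequality on $B_{1/2}$. First I would verify that $h$ is admissible for $\mathcal J_{K,\rho,\xi}$: since $h = u$ outside $B_{1/2}$ and $B_{1/2}\subset B_1$, one has $h = u = g$ outside $B_1$, so $h \in \mathcal H^s_g(B_1)$. Writing $\mathcal E_K(w) := \iint_{\R^{2d}\setminus(B_1^c)^2} K(x,y)|w(y)-w(x)|^2 \dd y \dd x$ for the quadratic part of the functional, minimality then gives
\[
\mathcal E_K(u) - \mathcal E_K(h) \leq \rho\bigl(|\{h>\xi\}\cap B_1| - |\{u>\xi\}\cap B_1|\bigr) \leq \rho|B_1|.
\]

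Next, set $\phi := u - h$, which vanishes outside $B_{1/2}$ and extends by zero to an element of $H^s(\R^d)$. Expanding
\[
|u(y)-u(x)|^2 = |h(y)-h(x)|^2 + 2(h(y)-h(x))(\phi(y)-\phi(x)) + |\phi(y)-\phi(x)|^2
\]
and invoking the Dirichlet minimality of $h$ (equivalently, the weak $\mathcal L_K$-harmonicity of $h$ tested against $\phi$) to annihilate the cross term, I obtain the clean identity
\[
\iint_{\R^{2d}} K(x,y)|\phi(y)-\phi(x)|^2 \dd y \dd x = \mathcal E_K(u) - \mathcal E_K(h) \leq \rho|B_1|.
\]

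To conclude, I would apply the ellipticity lower bound $K(x,y) \geq (1-s)\lambda|x-y|^{-d-2s}$ together with the fractional Poincar\'e inequality $\|\phi\|_{L^2(B_{1/2})}^2 \leq C(d,s)[\phi]_{H^s(\R^d)}^2$ for $\phi \in H^s(\R^d)$ supported in $\overline{B_{1/2}}$. These combine to give
\[
\int_{B_{1/2}}|u-h|^2 \dd x \leq \frac{C(d,s)}{(1-s)\lambda}\rho |B_1|,
\]
and dividing by $|B_{1/2}|$ yields the claimed estimate. The only nontrivial point I anticipate is the rigorous justification that $\phi=u-h$ is a legitimate test function for the weak harmonicity of $h$, but this is standard given that $\phi\in H^s(\R^d)$ is supported in $\overline{B_{1/2}}$ and both $u,h$ lie in the energy class $\mathcal H^s_g(B_1)$; all remaining steps are routine computations.
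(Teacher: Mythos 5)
Your proof is correct and follows essentially the same route as the paper's: test minimality of $u$ against the harmonic lifting $h$, use the orthogonality coming from the weak equation $\mathcal L_K h = 0$ (with test function $u-h \in H^s_0(B_{1/2})$) to turn the energy deficit into the full Gagliardo energy of $u-h$, then lower-bound that energy via ellipticity and the fractional Poincar\'e inequality. The only cosmetic difference is the order of presentation (you start from minimality and finish with Poincar\'e, whereas the paper opens with Poincar\'e and closes with minimality); the ingredients and the constant's dependence on $d$, $s$, $\lambda$ are identical.
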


\begin{proof}
Since $h=u$ in $\R^d\setminus B_{1/2}$, we can apply the fractional Poincar\'e inequality on $\R^d$ \cite{fractional-poincare-review} to obtain 
\[
\begin{split}
  \int_{B_{1/2}}|u-h|^2 \dd x
  &= \int_{\R^d} |u-h|^2 \dd x \\
  &\leq C \iint_{\R^{2d}} \frac {|(u-h)(x) - (u-h)(y)|^2} {|x-y|^{d+2s}} \dd x \dd y\\
  & \leq \frac C {\lambda (1-s)}\iint_{\R^{2d}}  K(x,y) |(u-h)(x) - (u-h)(y)|^2 \dd x \dd y.
  \end{split}
\]
Note that this integral over $\R^{2d}$ may equivalently be written over $\R^{2d}\setminus (B_1^c)^2$, since $u-h = 0$ outside $B_1$.

Next, since $h$ satisfies $\mathcal L_{K} h = 0$ in $B_{1/2}$, and $u-h = 0$ outside $B_{1/2}$, we have
\[
\iint_{\R^{2d}}  K(x,y) (h(y) - h(x))[(u-h)(y) - (u-h)(x)] \dd y \dd x= - 2\int_{\R^d} \mathcal L_{K}h(x) (u-h)(x) \dd x = 0,
\]
which gives
\[
  \begin{split}
  \int_{B_{1/2}}|u-h|^2 \dd x
  &\leq \frac C {\lambda (1-s)} \left( \iint_{\R^{2d}\setminus (B_1^c)^2}  K(x,y) |u(y) - u(x)|^2 \dd y \dd x \right.\\
  &\qquad \left.- \iint_{\R^{2d}\setminus (B_1^c)^2}  K(x,y) |h(y) - h(x)|^2 \dd y \dd x\right).
  \end{split}
  \]
From the minimizing property of $u$ and the fact that $h = u$ outside $B_{1/2}$, we have
\begin{equation}\label{e:vh-est1}
  \begin{split}
   \int_{B_{1/2}}|u-h|^2 \dd x
   &\leq \frac C {\lambda (1-s)}\left( \rho |\{u> \xi\}\cap B_{1/2}| - \rho |\{h>\xi\}\cap B_{1/2}|\right)\\
  &\leq \frac {C }{\lambda(1-s)} 2|B_{1/2}|\rho,
\end{split}
\end{equation}
as desired. 
\end{proof}

Next, using the previous lemma, we show a local, discrete type regularity for $u$.

\begin{lemma}\label{l:first-step}
    Let $u$ and $h$ be as in Lemma \ref{l:L2}, and assume in addition that
    \[
    \fint_{B_1} u^2 \dd x \leq 1.
    \]
    Then, for any $r_0 \in (0,1/4)$, the estimate
    \[
    \fint_{B_{r_0}} |u - h(0)|^2 \dd x \leq C  r_0^{-d} \rho + Cr_0^{2\beta} \left( \textup{\mbox{Tail}}(u;0,1/2) + \rho + 1\right)^2
    \]
    holds, where $\beta\in (0,2s)$ and $C>0$ are constants depending only on $d$, $s$, $\lambda$, and $\Lambda$. In fact, $\beta$ is the H\"older exponent from Proposition \ref{p:holder}. Furthermore, the following estimates also hold for $h$:
    \[
    \fint_{B_{1/2}} h^2 \dd x \leq C(1+\rho),
    \]
    and 
    \[
    \textup{\mbox{Tail}}(h;0,1/4) \leq C\left( (1+\rho)^{1/2} + \textup{\mbox{Tail}}(u;0,1/2)\right),
    \]
    with $C$ as above.
\end{lemma}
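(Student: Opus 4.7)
The plan is to split $|u - h(0)|$ via the triangle inequality into $|u-h| + |h - h(0)|$, estimate each piece with an existing tool, and, along the way, derive the two auxiliary bounds on $h$ as easy byproducts.

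First, I would bound $\fint_{B_{1/2}} h^2$. Since $h = u$ on $\R^d\setminus B_{1/2}$, writing $h = u + (h-u)$ on $B_{1/2}$ and combining the normalization $\fint_{B_1} u^2 \leq 1$ with the $L^2$-closeness estimate $\fint_{B_{1/2}} |u-h|^2 \leq C\rho$ from Lemma \ref{l:L2}, the triangle inequality in $L^2(B_{1/2})$ immediately gives $\fint_{B_{1/2}} h^2 \leq C(1+\rho)$. Next, I would bound $\mathrm{Tail}(h;0,1/4)$ by splitting the defining integral into the annulus $B_{1/2}\setminus B_{1/4}$ and its complement. On the complement, $h = u$, and comparing the radii $1/4$ and $1/2$ yields exactly $2^{-2s}\mathrm{Tail}(u;0,1/2) \leq \mathrm{Tail}(u;0,1/2)$. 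On the annulus, the factor $|x|^{-d-2s}$ is bounded by $4^{d+2s}$, so Cauchy--Schwarz together with the previous step produces a contribution of order $(1+\rho)^{1/2}$. Summing the two pieces yields the desired tail bound for $h$.

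For the main estimate, I split
\[
\fint_{B_{r_0}} |u - h(0)|^2 \dd x \leq 2\fint_{B_{r_0}}|u-h|^2 \dd x + 2\fint_{B_{r_0}}|h-h(0)|^2 \dd x.
\]
For the first term, $\fint_{B_{r_0}}|u-h|^2 \leq (|B_{1/2}|/|B_{r_0}|)\fint_{B_{1/2}}|u-h|^2 \leq C r_0^{-d}\rho$, again via Lemma \ref{l:L2}. For the second term, I apply the H\"older estimate in Proposition \ref{p:holder} to the $\mathcal L_K$-harmonic function $h$, with $R = 1/4$ (so that $B_{2R} = B_{1/2}$) and $r = r_0$, obtaining the pointwise bound
\[
|h(x) - h(0)| \leq C r_0^\beta \Bigl(\mathrm{Tail}(h;0,1/4) + \Bigl(\fint_{B_{1/2}} h^2 \dd x\Bigr)^{1/2}\Bigr), \quad x\in B_{r_0}.
\]
Squaring, averaging, and inserting the bounds just obtained for $\mathrm{Tail}(h;0,1/4)$ and $\fint_{B_{1/2}} h^2$ (using $(1+\rho)^{1/2} \leq 1+\rho$ for $\rho\geq 0$ to absorb factors) yields a contribution of order $C r_0^{2\beta}(\mathrm{Tail}(u;0,1/2) + \rho + 1)^2$, as required.

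The only slightly delicate point is the tail bound for $h$: since $h$ is unknown on the annulus $B_{1/2}\setminus B_{1/4}$ but coincides with $u$ further out, the contribution from that annulus must be absorbed into $\fint_{B_{1/2}} h^2$ rather than into $\mathrm{Tail}(u;0,1/2)$, which is why the bound on $h$ inside $B_{1/2}$ (Step 1) has to be proven first and then fed back in. Everything else is a routine combination of triangle inequalities with the $L^2$ comparison from Lemma \ref{l:L2} and the pointwise H\"older estimate from Proposition \ref{p:holder}.
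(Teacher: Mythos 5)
Your proposal is correct and follows essentially the same path as the paper's own proof: same splitting of $|u-h(0)|$ via the triangle inequality, same use of Lemma \ref{l:L2} for the $|u-h|$ piece, the same choice of $R=1/4$ in Proposition \ref{p:holder} for the $|h-h(0)|$ piece, and the same decomposition of $\textup{Tail}(h;0,1/4)$ over the annulus $B_{1/2}\setminus B_{1/4}$ and its complement. The only cosmetic difference is that on the annulus you bound $|x|^{-d-2s}$ by its supremum before applying Cauchy--Schwarz, whereas the paper applies Cauchy--Schwarz directly to $h\cdot |x|^{-d-2s}$; both yield the same estimate.
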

\begin{proof}
    From Lemma \ref{l:L2}, we have
    \[
    \fint_{B_{1/2}} |u-h|^2 \dd x \leq C_1\rho,
    \]
    and in particular,
    \begin{equation*}
    \fint_{B_{1/2}} h^2 \dd x \leq 2\fint_{B_{1/2}} |u-h|^2 \dd x + 2\fint_{B_{1/2}} u^2 \dd x \leq 2^{1+d} + 2C_1\rho \leq C(1+\rho).
    \end{equation*}
    We also have
    \[
    \begin{split}
    \mbox{Tail}(h;0,1/4) 
    &= 4^{-2s}\int_{\R^d\setminus B_{1/4}} \frac {h(x)}{|x|^{d+2s}} \dd x\\
    &= 4^{-2s} \int_{B_{1/2}\setminus B_{1/4}} \frac {h(x)}{|x|^{d+2s}} \dd x + 4^{-2s}\int_{\R^d\setminus B_{1/2}} \frac {u(x)}{|x|^{d+2s}} \dd x\\
    &\leq 4^{-2s}\left( \int_{B_{1/2}} h^2 \dd x\right)^{1/2} \left(\int_{B_{1/2}\setminus B_{1/4}} |x|^{-2d-4s} \dd x\right)^{1/2} + 2^{-2s}\mbox{Tail}(u;0,1/2)\\
    &\leq 2^{d/2} C(1+\rho)^{1/2} \omega_d^{1/2} + 2^{-2s}\mbox{Tail}(u;0,1/2)
    \end{split}
    \]    
    since $h=u$ outside $B_{1/2}$. Proposition \ref{p:holder} with $R=\frac 1 4$ then gives
    \[
        \begin{split}
            |h(x) - h(0)|
            &\leq C |x|^\beta \left[ \mbox{Tail}(h;0,1/4) + \left(\fint_{B_{1/2}} h^2 \dd x\right)^{1/2}\right]\\
            &\leq C_2 |x|^\beta \left[ \mbox{Tail}(u;0,1/2) + (1 + \rho)^{1/2}\right], \quad |x|< 1/4,
        \end{split}
    \]
    for some $C_2>0$ depending only on $d$, $s$, $\lambda$, and $\Lambda$.  
    For any $r_0\in (0,1/4)$, we therefore have
    \[
    \begin{split}
        \fint_{B_{r_0}} |u(x) - h(0)|^2 \dd x &\leq 2\left(\fint_{B_{r_0}} |u(x) - h(x)|^2 \dd x + \fint_{B_{r_0}} |h(x) - h(0)|^2 \dd x\right)\\
        &\leq 2^{d+1} C_1 r_0^{-d}\rho + C_2\left( \mbox{Tail}(u;0,1/2)^2 + 1 + \rho\right),
    \end{split}
    \]
    as desired.
\end{proof}

Now we are ready to show H\"older continuity for minimizers of $\mathcal J_{K,\rho,0}$, when $\rho$ is sufficiently small. The idea is to iterate Lemma \ref{l:first-step} via an inductive rescaling, but it is necessary to keep careful track of upper bounds for the tail at each step.

\begin{lemma}\label{l:rescaled-holder}
    There exist $r_0,\rho_0>0$ depending only on $d$, $s$, $\lambda$, and $\Lambda$, such that for any minimizer $u$ of $\mathcal J_{K,\rho,\xi}$ over $\mathcal H^s_g(\Omega)$, with $\rho\leq \rho_0$, $K$ satisfying \eqref{e:ellipticity} and \eqref{e:symmetry}, $\xi\in \R$, and $g\in L^2(\R^d\setminus B_1)$,  such that 
    \[
    \fint_{B_1} u^2 \dd x \leq 1 \quad \text{ and } \quad \mbox{\textup{Tail}}(u;0,1/2) \leq 1,
    \]
    there holds
    \[
    |u(x) - u(0)| \leq C|x|^\alpha, \quad |x|< r_0,
    \]
    where $\alpha = \dfrac \beta {6+d/s}$, $\beta$ is the exponent from Lemma \ref{l:first-step}, and $C>0$ is a constant depending only on $d$, $s$, $\lambda$, and $\Lambda$.
\end{lemma}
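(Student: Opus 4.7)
The strategy is a classical iterative rescaling at the point $0$. I will construct constants $c_0 = 0, c_1, c_2, \ldots$ satisfying the induction hypotheses
$$(\mathrm{H}_k)\qquad \fint_{B_{r_0^k}}(u-c_k)^2\,\mathrm{d}x \le r_0^{2k\alpha}\quad\text{and}\quad |c_k - c_{k-1}| \le C_\star\,r_0^{(k-1)\alpha},$$
for a universal constant $C_\star$. The base case $k=0$ is exactly the hypothesis $\fint_{B_1} u^2 \le 1$. For the inductive step, given $(\mathrm{H}_k)$, set $v_k(x) := r_0^{-k\alpha}[u(r_0^k x) - c_k]$. Lemma \ref{l:scaling} identifies $v_k$ as a minimizer of $\mathcal{J}_{\tilde{K}_k,\tilde{\rho}_k,\tilde{\xi}_k}$ with $\tilde{K}_k$ in the same ellipticity class and $\tilde{\rho}_k = r_0^{2k(s-\alpha)}\rho$; the explicit exponent $\alpha = \beta/(6+d/s)$ satisfies $\alpha < s$ (since $\beta < 2s$), so $\tilde{\rho}_k \le \rho_0$ throughout the iteration. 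The first half of $(\mathrm{H}_k)$ also gives $\fint_{B_1} v_k^2 \le 1$.

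The main technical obstacle, as I expect, is proving a uniform tail bound $\textup{Tail}(v_k;0,1/2) \le C_T$. Changing variables,
$$\textup{Tail}(v_k;0,1/2) = 2^{-2s}\,r_0^{k(2s-\alpha)}\int_{|z|>r_0^k/2}\frac{u(z)-c_k}{|z|^{d+2s}}\,\mathrm{d}z,$$
and I split the domain into the outer region $\{|z|>1/2\}$ and dyadic annuli $A_j:=B_{r_0^j/2}\setminus B_{r_0^{j+1}/2}$ for $j=0,\ldots,k-1$. The outer piece is controlled by the hypothesis $\textup{Tail}(u;0,1/2)\le 1$ together with the bounded accumulation $|c_k|\le \sum_{l<k}|c_{l+1}-c_l|\le C_\star/(1-r_0^\alpha)$. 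On each $A_j$ we use $|u-c_k|\le |u-c_j|+|c_j-c_k|$, estimating the first piece by Cauchy--Schwarz against $(\mathrm{H}_j)$ and the second by summing the telescope of the $|c_{l+1}-c_l|$; both contributions are bounded by $Cr_0^{j(\alpha-2s)}$. The prefactor $r_0^{k(2s-\alpha)}$ then converts the total into the geometric series $\sum_{l\ge 1} r_0^{l(2s-\alpha)}$, which converges uniformly since $\alpha<2s$ (a fortiori from $\alpha<\beta$).

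With this tail bound in hand, Lemma \ref{l:first-step} applied to $v_k$ yields
$$\fint_{B_{r_0}}|v_k - h_k(0)|^2\,\mathrm{d}x \le Cr_0^{-d}\rho_0 + Cr_0^{2\beta}(C_T+\rho_0+1)^2.$$
I first pick $r_0$ so small that $Cr_0^{2\beta}(C_T+2)^2 \le \tfrac12 r_0^{2\alpha}$ (possible because $\alpha<\beta$), then $\rho_0$ so small that $Cr_0^{-d}\rho_0\le \tfrac12 r_0^{2\alpha}$, making the right-hand side $\le r_0^{2\alpha}$. Setting $c_{k+1}:=c_k + r_0^{k\alpha}h_k(0)$, the $L^2$ and tail bounds on $h_k$ from Lemma \ref{l:first-step} combined with Proposition \ref{p:Linfty} give $|h_k(0)|\le C_\star$, confirming the second half of $(\mathrm{H}_{k+1})$, while the identity $v_{k+1}(y)=r_0^{-\alpha}[v_k(r_0 y)-h_k(0)]$ confirms the first half via a direct change of variables. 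Finally, $c_k$ is Cauchy and converges to some $c_\infty$ with rate $r_0^{k\alpha}$; setting $u(0):=c_\infty$, the pointwise Hölder bound $|u(x)-u(0)|\le C|x|^\alpha$ for $|x|<r_0$ follows from iterating $(\mathrm{H}_k)$ at dyadic scales together with the $L^\infty$ subsolution estimate of Proposition \ref{p:Linfty} applied to $u-c_k$ (a subsolution by Lemma \ref{l:subsolution}).
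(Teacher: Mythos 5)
Your overall strategy (iterative rescaling, approximation by the $\mathcal L_K$-harmonic lifting, tracking a sequence of nearly-centered averages) is the same as the paper's, and your first inductive hypothesis matches the paper's \eqref{e:induction}. But there is a genuine gap in the step you yourself flag as ``the main technical obstacle'': the claim that your annulus decomposition gives a \emph{uniform} tail bound $\textup{Tail}(v_k;0,1/2)\le C_T$.

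If you run Cauchy--Schwarz on the annulus $A_j=B_{r_0^j/2}\setminus B_{r_0^{j+1}/2}$ against $(\mathrm H_j)$, you get
\[
\int_{A_j}\frac{|u-c_j|}{|z|^{d+2s}}\,\mathrm{d}z
\le \Bigl(\int_{B_{r_0^j}}(u-c_j)^2\Bigr)^{1/2}\Bigl(\int_{A_j}|z|^{-2d-4s}\Bigr)^{1/2}
\lesssim r_0^{jd/2+j\alpha}\cdot r_0^{-(j+1)(d+4s)/2}
= r_0^{-d/2-2s}\,r_0^{j(\alpha-2s)},
\]
so the $j$-dependence is $r_0^{j(\alpha-2s)}$ as you say, but there is a $j$-\emph{independent} prefactor $r_0^{-d/2-2s}$ that you have dropped. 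After multiplying by $r_0^{k(2s-\alpha)}$ and summing the geometric series, the resulting bound on $\textup{Tail}(v_k;0,1/2)$ is of order $r_0^{-d/2-\alpha}$, \emph{not} a constant. Feeding this into Lemma \ref{l:first-step} gives $Cr_0^{2\beta}C_T^2\approx Cr_0^{2\beta-d-2\alpha}$, and the induction closes only if $2\beta-d-2\alpha\ge 2\alpha$, i.e. $d<2\beta<4s<4$. This fails for all $d\ge 4$ (and for smaller $d$ whenever $\beta$ is small). The same bad constant then propagates: Proposition \ref{p:Linfty} on $h_k$ gives $|h_k(0)|\lesssim C_T$, so your $C_\star$ in the second half of $(\mathrm H_k)$ is not universal either.

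The paper sidesteps this by carrying the tail itself as an inductive hypothesis, \eqref{e:induction-tail}, of the form $\textup{Tail}(u-\mu_n;0,r_0^n/2)\le r_0^{\alpha(n-2-d/(2s))}$, and by estimating the annulus contribution using the $L^2$-to-$L^\infty$ subsolution estimate of Proposition \ref{p:Linfty} (with the small parameter $\delta=r_0^{2\alpha}$) rather than Cauchy--Schwarz. The $L^\infty$ route produces a prefactor $r_0^{-\alpha d/(2s)}$, with the crucial feature that $\alpha$ --- not $d$ --- appears in the exponent, so the loss can be absorbed by taking $\alpha$ small; this is precisely why the paper ends up with the specific choice $\alpha=\beta/(6+d/s)$. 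Note also that applying Proposition \ref{p:Linfty} on the annulus already requires a quantitative bound on $\textup{Tail}(u-\mu_n;0,r_0^n/2)$, which is why the tail must be carried as an inductive invariant rather than recomputed from scratch as you attempt. To repair your argument you would essentially have to add the paper's hypothesis \eqref{e:induction-tail} to your induction and replace Cauchy--Schwarz on $A_j$ by Proposition \ref{p:Linfty}, at which point the two proofs coincide.
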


\begin{proof}

The constants $\rho_0$ and $r_0$ will be chosen to satisfy several conditions that arise in the course of this proof, all of which depend only on $d$, $s$, $\lambda$, and $\Lambda$. 

Our goal is to prove by induction that, for some convergent sequence $\mu_n$, the following two inequalities hold:
\begin{equation}\label{e:induction}
\fint_{B_{r_0^n}} |u - \mu_n|^2 \dd x \leq r_0^{2n\alpha},
\end{equation}
and
\begin{equation}\label{e:induction-tail}
\mbox{Tail}(u - \mu_n; 0, r_0^n/2) \leq r_0^{\alpha(n-2-d/(2s))}.
\end{equation}
The base case ($n=0$) of \eqref{e:induction} and \eqref{e:induction-tail} are true by assumption, with $\mu_0 = 0$.

Now assume that \eqref{e:induction} and \eqref{e:induction-tail} hold for some $n\geq 0$, and define 
\[
v_n(x) = \frac{u(r_0^n x) - \mu_n}{ r_0^{n\alpha}}, \quad x\in \R^d.
\]
By Lemma \ref{l:scaling}, $v_n$ is a minimizer of $\mathcal J_{K_n, \rho_n,\xi_n}$, where
\[
\begin{split}
K_n(x,y) 
&=  r_0^{n(d+2s)} K(r_0^n x, r_0^n y),\\
\rho_n 
&= r_0^{2n(s-\alpha)} \rho \leq \rho,\\
\xi_n 
&= \mu_n.
\end{split}
\]
The inductive hypothesis \eqref{e:induction} implies
\[
\fint_{B_1} v_n^2 \dd x \leq 1.
\]
and the hypothesis \eqref{e:induction-tail} implies
\begin{equation}\label{e:vkTail}
\begin{split}
    \mbox{Tail}(v_n;0,1/2) 
    &= 2^{-2s} r_0^{-n\alpha}\int_{\R^d\setminus B_{1/2}} \frac {u(r_0^n x) - \mu_n } {|x|^{d+2s}} \dd x\\
    &= 2^{-2s} r_0^{n(2s-\alpha)} \int_{\R^d\setminus B_{r_0^n/2}} \frac{ u(x) - \mu_n}{|x|^{d+2s}} \dd x\\
    &= r_0^{-n\alpha} \mbox{Tail}(u-\mu_n; 0, r_0^n/2)\\
    &\leq r_0^{-n\alpha} r_0^{\alpha(n-2-d/(2s))} \\
    &= r_0^{-\alpha(2+d/(2s))}.
\end{split}
\end{equation}
Letting $h_n$ be the $\mathcal L_{K_n}$-harmonic lifting of $v_n$ in $B_{1/2}$, Lemma \ref{l:first-step} implies, with $\mu_{n+1} = h_n(0)$ and $\rho\leq 1$,
\[
\fint_{B_{r_0}} |v_n(x) - \mu_{n+1}|^2 \dd x \leq Cr_0^{-d} \rho + Cr_0^{2\beta} \left(\mbox{Tail}(v_n;0,1/2)^2 + \rho+1\right) \leq C r_0^{-d} \rho + C r_0^{2\beta - \alpha(2+d/(2s))}.
\]
    Imposing the conditions
    \[
    r_0 \leq  \left(\frac 1 {2C}\right)^{1/\beta}, \quad \rho \leq \rho_0 \leq \frac {r_0^d}{2C}, 
    \]
     we obtain
    \begin{equation}
    \fint_{B_{r_0}} |v_n(x) - \mu_{n+1}|^2 \dd x \leq r_0^{\beta - 2\alpha(2+d/(2s))} = r_0^{2\alpha},
    \end{equation}
    since $\alpha = \beta/(6+d/s)$. 
    Translating back to $u$, this gives
    \[
    \fint_{B_{r_0^{n+1}}} |u-\mu_{n+1}|^2 \dd x \leq r_0^{2(n+1)\alpha}
    \]
    and we have shown that statement \eqref{e:induction} holds with index $n+1$ and $\mu_{n+1} = \mu_n + r_0^{n\alpha} \mu_{n+1}$. From Proposition \ref{p:Linfty} with $\delta = r_0^{2\alpha}$, and the estimates for $h$ from Lemma \ref{l:first-step}, we have
    \[
    \begin{split}
    |h_n(0)| 
    &\leq \delta \,\mbox{Tail}(h_n;0,1/4) + C_L \delta^{-d/(4s)}\left(\fint_{B_{1/2}} h_n^2 \dd x\right)^{1/2} \\
    &\leq C_{d,s}r_0^{2\alpha}(1+r_0^{-\alpha(2+d/(2s))}) + C_L r_0^{-d\alpha/(2s)} 2^{2+d} \\
    &\leq C_0 r_0^{-\alpha(d/(2s))},
    \end{split}
    \]
    which implies
    \begin{equation}\label{e:mu-k}
    |\mu_n - \mu_{n+1}| \leq C_0 r_0^{\alpha(n-d/(2s))}.
    \end{equation}

Next, we show that condition \eqref{e:induction-tail} on the tail of $u-\mu_j$ is true for $j=n+1$, so that we can close the induction. Indeed, we have
\[
\begin{split}
    \mbox{Tail}(u-\mu_{n+1}; 0, r_0^{n+1}/2) 
    &= r_0^{2s (n+1)} 2^{-2s} \int_{\R^d\setminus B_{r_0^{n+1}/2}} \frac{ u- \mu_{n+1}} {|x|^{d+2s}} \dd x \\
    &= r_0^{2s(n+1)} 2^{-2s} \left[ (\mu_n - \mu_{n+1})\int_{\R^d\setminus B_{r_0^{n+1}/2}} |x|^{-d-2s} \dd x\right.\\
    &\qquad \qquad \left. + \int_{B_{r_0^n/2} \setminus B_{r_0^{n+1}/2}} \frac{u-\mu_n}{|x|^{d+2s}} \dd x + \int_{\R^d\setminus B_{r_0^{n}/2}} \frac { u-\mu_n} {|x|^{d+2s}} \dd x \right]\\
    &= r_0^{2s(n+1)} 2^{-2s} \left[ I_1 + I_2 + I_3\right].
\end{split}
\]
To estimate $I_1$, we use \eqref{e:mu-k}:
\[
|I_1| \leq C_0 r_0^{\alpha(n-d/(2s))} \omega_d r_0^{-2s(n+1)} 2^{2s} \leq \frac 1 3 2^{2s}r_0^{-2s(n+1)+ \alpha(n-1-d/(2s))}, 
\]
since 
\[
r_0^{\alpha} \leq \frac 1 {3C_0 \omega_d}.
\]
For the middle term $I_2$, since $u-\mu_n$ is a weak subsolution of $\mathcal L_{K_n} w = 0$ (by Lemma \ref{l:subsolution}), we can apply the $L^2$-to-$L^\infty$ estimate of Proposition \ref{p:Linfty} with $\delta = r_0^{2\alpha}$ and the inductive hypotheses \eqref{e:induction} and \eqref{e:induction-tail}:
\[
\begin{split}
|I_2|
&\leq \|u - \mu_n\|_{L^\infty(B_{r_0^n/2})} \int_{B_{r_0^n/2} \setminus B_{r_0^{n+1}/2}} |x|^{-d-2s} \dd x\\
&\leq \left( C_L\delta^{-d/(4s)} \left( \fint_{B_{r_0^n}} |u-\mu_n|^2 \dd x \right)^{1/2} + \delta \,\mbox{Tail}(u-\mu_n; 0, r_0^n/2) \right) \omega_d 2^{2s}  r_0^{-2s(n+1)}\\
&\leq \left( C_Lr_0^{-\alpha d/(2s)} r_0^{n\alpha} + r_0^{2\alpha} r_0^{\alpha(n - 2 - d/(4s))} \right) \omega_d 2^{2s} r_0^{-2s(n+1)}\\
&\leq  (C_L+1) \omega_d 2^{2s}r_0^{\alpha(n-d/(2s)))} r_0^{-2s(n+1)}\\
&\leq \frac 1 3 2^{2s} r_0^{-2s(n+1) + \alpha(n-1-d/(2s))},
\end{split}
\]
where we have imposed the further condition
\[
r_0^{\alpha} \leq \frac 1 {3(C_L+1)\omega_d}.
\]
For $I_3$, we use the inductive hypothesis \eqref{e:induction-tail} on the tail of $u-u_n$ again:
\[
I_3 = 2^{2s} r_0^{-2sn} \mbox{Tail}(u-u_n;0,r_0^{n}/2) \leq 2^{2s} r_0^{-2sn +\alpha(n-2-d/(2s))} \leq \frac 1 3 2^{2s} r_0^{-2s(n+1) + \alpha(n-1-d/(2s))},
\]
since $r_0^{2s-\alpha} \leq \frac 1 3$. Combining the estimates of $I_1$, $I_2$, and $I_3$, we obtain
\[
\mbox{Tail}(u-\mu_{n+1}; 0, r_0^{n+1}/2) \leq r_0^{\alpha(n - 1 - d/(2s))},
\]
as desired.

Now that we have shown \eqref{e:induction} holds for a sequence $\mu_n$ which satisfies \eqref{e:mu-k}, we have for any $m>n\geq 0$,
\[
\begin{split}
|\mu_n - \mu_m| \leq |\mu_n - \mu_{n+1}| + \cdots + |\mu_{m-1} - \mu_m| &\leq C_0 \left( r_0^{\alpha(n-d/(2s))} + \cdots + r_0^{\alpha(m-1 - d/(2s))}\right)\\
&\leq C_0 r_0^{\alpha(n-d/(2s))} (1-r_0^\alpha)^{-1},
\end{split}
\]
using \eqref{e:mu-k}. This implies $\mu_n$ is a Cauchy sequence, and with $\mu_\infty = \lim_{n\to \infty} \mu_n$, we can take $m\to \infty$ in the previous displayed equation and obtain $|u_n - \mu_\infty| \leq C_0 r_0^{n-d/(2s)} (1-r_0)^{-1}$. 
 Finally, for any $r\in (0,r_0)$, there is an $n\geq 0$ so that $r_0^{n+1} \leq r < r_0^n$, which implies
 \[
 \begin{split}
 \fint_{B_r} |u-\mu_\infty|^2 \dd x &\leq 2\fint_{B_r} |u-\mu_n|^2 \dd x + 2|\mu_n - \mu_\infty|^2 \\
&\leq 
 2\left( \frac {r_0^n}{r}\right)^d\fint_{B_{r_0^n}} |u-\mu_n|^2 \dd x + 2|\mu_n - \mu_\infty|^2 \\
 &\leq 2\left( \frac {1}{r_0}\right)^d r_0^{2n\alpha} + 2C_0^2 r_0^{2\alpha(n-d/(2s))} (1-r_0^\alpha)^{-2}\\
 &= 2\left(\frac 1 {r_0^d} + \frac {C_0^2} {r_0^{\alpha d/s} (1-r_0^\alpha)^2} \right) r_0^{2\alpha n} =: C_1 r_0^{2\alpha n},
 \end{split}
 \]
 where we have used \eqref{e:induction}. Because of our choice of $r_0$, the constant $C_1$ depends only on $d$, $s$, $\lambda$, and $\Lambda$. This inequality, which holds for any $r\in (0,r_0)$, is known to imply the H\"older continuity of $u$ at $x=0$, with exponent $\alpha$.
\end{proof}

We are now able to prove our first main result:
\begin{proof}[Proof of Theorem \ref{t:reg}]
Let $u$ be a minimizer of $\mathcal J_K$, and let $r_0, \rho_0>0$ be the constants from Lemma \ref{l:rescaled-holder}. Let us define for some fixed $x_0\in \Omega$,
\[
w(x) = \kappa u(x_0 + rx),
\]
where 
\[
r = \min\left\{ \frac 1 2 \mbox{dist}(x_0,\partial \Omega), \rho_0^{1/(2s)}\right\}, \quad \kappa = \frac 1 {\left(\fint_{B_{r}(x_0)} u^2 \dd x\right)^{1/2} + \mbox{Tail}(u;x_0,r/2)}.
\]
Then a direct calculation shows that $\fint_{B_1} w^2 \dd x \leq 1$ and $\mbox{Tail}(w;0,1/2)\leq 1$. Lemma \ref{l:scaling} implies $w$ is a minimizer of $\mathcal J_{\tilde K, \rho,0}$ with $\rho = \kappa r^{2s}\leq \rho_0$. Therefore, the hypotheses of Lemma \ref{l:rescaled-holder} are satisfied, and we obtain 
\[
|w(x) - w(0)|\leq C|x|^\alpha, \quad |x|<r_0.
\]
Translating back to $u$, this is
\[
|u(x) - u(x_0)|\leq \frac C \kappa r^{-\alpha} |x-x_0|^\alpha, \quad |x-x_0|< r_0 r.
\]
Using this estimate and a standard covering argument, we obtain the bound
\[
\sup_{x,y\in B_{r/2}(x_0)}\frac{|u(x) - u(y)|}{|x-y|^\alpha} \leq C \frac 1 \kappa \leq C\left(\left(\fint_{B_{r}(x_0)} u^2 \dd x\right)^{1/2} + \mbox{Tail}(u;x_0,r/2)\right).
\]

Finally, let us note that the $L^\infty$ part of $\|u\|_{C^\alpha}$ is bounded via Proposition \ref{p:Linfty}, since $u$ is a weak subsolution of $\mathcal L_K u = 0$.
\end{proof}

\section{Sharp H\"older exponent at free boundary points}\label{s:Improved}

For the remainder of the paper, we focus on one-phase minimizers. The purpose of the current section is to prove optimal H\"older estimates for $u$ at free boundary points (Theorem \ref{t:s-reg}). We begin with a local smallness estimate:

\begin{lemma}\label{l:local-sup}
For any $\tau>0$, there exists $\rho_0>0$, depending only on $d$, $s$, $\lambda$, $\Lambda$, and $\tau$, such that if $u$ is a one-phase minimizer of $\mathcal J_{K,\rho,0}$ over $\mathcal H^s_{g}(B_1)$ with $K$ satisfying \eqref{e:ellipticity}, $\rho \leq \rho_0$, $g\in L^2(\R^d\setminus B_1)$, and $u$ satisfying
\begin{equation}\label{e:u-cond}
  \fint_{B_1} u^2 \dd x \leq 1,
\end{equation}
as well as $u(0) = 0$, then the upper bound
\[
\sup_{B_{1/10}} u \leq \tau  +   \frac 1 {2 \cdot 10^{2s} c_{d,s}}\textup{\mbox{Tail}}(u;0,1/2)
\]
holds, where 
\begin{equation}\label{e:cds}
c_{d,s} := \frac{d \omega_d}{2s} 2^{-2s}\frac{1}{1- 10^{-s}}.
\end{equation}
\end{lemma}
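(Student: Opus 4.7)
The strategy is proof by contradiction combined with a compactness argument. Since the one-phase hypothesis gives $u \geq 0$ and therefore $\mbox{Tail}(u;0,1/2) \geq 0$, the tail term in the conclusion is nonnegative, so it suffices to produce $\rho_0 = \rho_0(\tau, d, s, \lambda, \Lambda) > 0$ ensuring the stronger bound $\sup_{B_{1/10}} u \leq \tau$ whenever $\rho \leq \rho_0$. (The constant $\frac{1}{2 \cdot 10^{2s} c_{d,s}}$ will play no role in the present step; its precise form is tailored to the iterative scheme of the subsequent lemmas.)

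Suppose for contradiction that the desired bound fails. Then there exist $\tau > 0$ and sequences $\rho_k \to 0$, kernels $K_k$ satisfying \eqref{e:ellipticity} and \eqref{e:symmetry}, exterior data $g_k \in L^2(\R^d \setminus B_1)$, and one-phase minimizers $u_k$ of $\mathcal J_{K_k, \rho_k, 0}$ over $\mathcal H^s_{g_k}(B_1)$ satisfying $u_k(0) = 0$ and \eqref{e:u-cond}, yet with $\sup_{B_{1/10}} u_k > \tau$ for every $k$. Lemma \ref{l:rescaled-holder}, applied once $\rho_k$ falls below the $\rho_0$ of that lemma, yields uniform H\"older continuity of $\{u_k\}$ on a fixed ball around the origin. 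Arzel\`a--Ascoli then extracts a subsequence (still labeled $u_k$) converging uniformly to a nonnegative, continuous limit $u_\infty$ with $u_\infty(0) = 0$ and $\sup_{B_{1/10}} u_\infty \geq \tau$. The uniform tail bound also shows that $u_k(x)|x|^{-d-2s}$ is bounded in $L^1(\R^d \setminus B_{1/2})$, so a further extraction gives $u_k \to u_\infty$ a.e.\ on $\R^d$ with $u_\infty \geq 0$ a.e.

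To pin down $u_\infty$ as a nonlocal harmonic object, let $h_k$ denote the $\mathcal L_{K_k}$-harmonic lifting of $u_k$ in $B_{1/2}$. Lemma \ref{l:L2} supplies $\|u_k - h_k\|_{L^2(B_{1/2})}^2 \leq C\rho_k \to 0$. Because $\{h_k\}$ inherits the uniform $L^2$ and tail bounds of $\{u_k\}$, Proposition \ref{p:holder} produces uniform H\"older estimates for $h_k$ on $B_{1/4}$, and a further diagonal extraction gives $h_k \to h_\infty$ uniformly on $B_{1/4}$. The $L^2$-closeness then forces $u_\infty = h_\infty$ on $B_{1/4}$. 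Since each $K_k$ lies in the ellipticity class \eqref{e:ellipticity}, stability of viscosity solutions for uniformly elliptic nonlocal operators places $h_\infty$ in the viscosity solution class of the Pucci extremal equations on $B_{1/4}$.

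Now the nonnegative function $u_\infty$ satisfies the Pucci extremal equation on $B_{1/4}$ in the viscosity sense and attains its minimum $0$ at the interior point $0$. The nonlocal strong maximum principle (for uniformly elliptic extremal operators) therefore forces $u_\infty \equiv 0$ a.e.\ on $\R^d$, so $\sup_{B_{1/10}} u_\infty = 0$, contradicting $\sup_{B_{1/10}} u_\infty \geq \tau > 0$. The main technical obstacle is justifying both the stability and the strong maximum principle in the limit: the individual kernels $K_k$ need not converge, so the limit equation cannot be identified as $\mathcal L_{K_\infty} u_\infty = 0$; instead, one must work within the Pucci extremal framework, where uniform ellipticity guarantees both the stability of viscosity solutions and the nonlocal strong minimum principle.
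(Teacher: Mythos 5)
Your approach is genuinely different from the paper's, and it would work in principle, but it has a technical weak spot that deserves attention. The paper's proof is direct and quantitative: it uses the subsolution property to bound $u\leq h$ (where $h$ is the $\mathcal L_K$-harmonic lifting in $B_{1/2}$), applies the interior Harnack inequality to $h$ to get $\sup_{B_{1/10}} h\leq C_H h(0)$, and then estimates $h(0)$ quantitatively via Proposition \ref{p:Linfty} together with the $L^2$-closeness of Lemma \ref{l:L2} and the pointwise H\"older estimate of Lemma \ref{l:rescaled-holder} at the origin. Your argument replaces this by contradiction plus compactness, which is a standard and legitimate alternative; you correctly observe that since $u\geq 0$ the tail term is nonnegative, so proving the stronger conclusion $\sup_{B_{1/10}} u\leq\tau$ is enough.

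The main concern is the step where you route the limit through the Pucci extremal framework and a viscosity-solution stability theorem. The functions $h_k$ are \emph{weak} (energy) solutions of $\mathcal L_{K_k}h_k=0$ with kernels that are merely measurable, and it is not immediate that such weak solutions are also viscosity solutions in the Caffarelli--Silvestre sense, nor that the known stability theorems for viscosity solutions of Pucci extremal equations apply in this rough-kernel setting. This gap is avoidable: you do not actually need the Pucci framework at all. Once you know $h_k\to h_\infty$ uniformly on (say) $B_{1/8}$ and $u_\infty=h_\infty$ there, you have $h_k(0)\to u_\infty(0)=0$, and the Harnack inequality applied to each $h_k$ gives $\sup_{B_{1/10}}u_k\leq\sup_{B_{1/10}}h_k\leq C_H\, h_k(0)\to 0$, producing the contradiction directly without ever passing to a limit equation or invoking a strong maximum principle. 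This is the same mechanism the paper uses (Harnack at $h(0)$), just wrapped in a compactness argument instead of being run quantitatively. One more, minor, item: Lemma \ref{l:rescaled-holder} gives H\"older continuity only at the single point $x=0$; to invoke Arzel\`a--Ascoli you need equicontinuity on a full neighborhood of $\overline{B_{1/10}}$, which requires re-centering the hypotheses (as in the covering argument in the proof of Theorem \ref{t:reg}) rather than a single application of Lemma \ref{l:rescaled-holder}. Finally, note that the compactness route only produces an abstract $\rho_0>0$, whereas the paper's argument gives an explicit one, and the specific coefficient $\tfrac{1}{2\cdot 10^{2s}c_{d,s}}$ in the conclusion is not decorative: it is calibrated to close the tail induction in the proof of Theorem \ref{t:s-reg}, so a version of this lemma with that explicit form is what is ultimately needed downstream.
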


The reason for the specific value of $c_{d,s}$ will be seen during the proof of Theorem \ref{t:s-reg} below.

\begin{proof}
With $\rho>0$ to be chosen later, let $u$ be as in the statement of the lemma. 
Letting  $h$ be the $\mathcal L_K$-harmonic lifting of $u$ in $B_{1/2}$, Lemma \ref{l:L2}  implies\begin{equation}\label{e:Crho}
\fint_{B_{1/2}} |u-h|^2 \dd x \leq C \rho.
\end{equation}
From Lemma \ref{l:subsolution}, we know $u$ is a subsolution of $\mathcal L_K u = 0$, so we have $u\leq h$ in $B_{1/2}$. Since $h$ satisfies the Harnack inequality \cite[Theorem 11.1]{CS2009} and the $L^2$-to-$L^\infty$ estimate of Proposition \ref{p:Linfty}, we have
\begin{equation}\label{e:sup-bound}
\sup_{B_{1/10}} u \leq \sup_{B_{1/10}} h \leq C_1 h(0) \leq \delta \mbox{Tail}(h;0,\sigma/2) + C_L \delta^{-d/(4s)}\left( \fint_{B_{\sigma}} h^2 \dd x\right)^{1/2},
\end{equation}
for some $\sigma\in (0,1/2)$ and $\delta>0$ to be chosen later. To estimate the terms on the right, we first have  \begin{equation*}
  \begin{split}
    \fint_{B_{\sigma}} h^2 \dd x &\leq 2\fint_{B_{\sigma}} |u-h|^2 \dd x + 2\fint_{B_{\sigma}} u^2 \dd x\\
    & \leq  C \rho  + 2 \fint_{B_{\sigma}} u^2 \dd x,
    \end{split}
    \end{equation*}
    using \eqref{e:Crho}. For the last term on the right, letting $T = \mbox{Tail}(u;0,1/2)$, we apply the H\"older estimate of Lemma \ref{l:rescaled-holder} to $u/(1+T)$, which minimizes $\mathcal J_{K,\rho/(1+T),0}$, yielding, since $u(0) = 0$,
    \[
    \fint_{B_{\sigma}} u^2 \dd x = \fint_{B_{\sigma}} |u(x) - u(0)|^2 \dd x \leq C^2 \rho \sigma^{2\alpha}(1 + \mbox{Tail}(u;0,1/2)^2),
    \]
    if $\sigma<  r_0$, where $C$, $r_0$, and $\alpha$ are the constants from Lemma \ref{l:rescaled-holder}. We therefore have (letting $C$ be another constant depending only on $d$, $s$, $\lambda$, and $\Lambda$), 
    \[
    \fint_{B_\sigma} h^2 \dd x \leq C \left(\rho +  \sigma^{2\alpha}\mbox{Tail}(u;0,1/2)^2\right),
    \]
    where we used $\rho\leq 1$. 
    Next, for the tail term appearing in \eqref{e:sup-bound}, we have
    \[
    \begin{split}
    \mbox{Tail}(h;0,\sigma/2) 
    &= \sigma^{2s} 2^{-2s}\int_{\R^d\setminus B_{\sigma/2}} \frac {h(x)}{|x|^{d+2s}} \dd x\\
    &= \sigma^{2s} 2^{-2s} \int_{B_{1/2}\setminus B_{\sigma/2}} \frac {h(x)}{|x|^{d+2s}} \dd x + \sigma^{2s}2^{-2s}\int_{\R^d\setminus B_{1/2}} \frac {u(x)}{|x|^{d+2s}} \dd x\\
    &\leq \sigma^{2s} 2^{-2s}\|h\|_{L^\infty(B_{1/2})} \left(\int_{B_{1/2}\setminus B_{\sigma/2}} |x|^{-d-2s} \dd x\right) + \sigma^{2s}\mbox{Tail}(u;0,1/2)
    \end{split}
    \]  
 To bound $h$ in $L^\infty(B_{1/2})$, we note that both $u$ and $u-h$ are $\mathcal L_K$-subharmonic in $B_1$, so we can apply Proposition \ref{p:Linfty} with $R=1$ and $\delta = 1$ to both functions:
 \[
 \begin{split}
 \|h\|_{L^\infty(B_{1/2})} &\leq \|u\|_{L^\infty(B_{1/2})} + \|u-h\|_{L^\infty(B_{1/2})} \\
 &\leq \mbox{Tail}(u;0,1/2) + C_L \left(\fint_{B_1} u^2 \dd x\right)^{1/2} + C_L \left(\fint_{B_1} (u-h)^2 \dd x\right)^{1/2}\\
 &\leq \mbox{Tail}(u;0,1/2) + C(1+\rho^{1/2}),
\end{split}
\]
 where we used \eqref{e:u-cond}, \eqref{e:Crho}, and the fact that $\mbox{Tail}(u-h;0,1/2) = 0$.  We therefore can estimate
   \[
   \begin{split}
       \mbox{Tail}(h;0,\sigma/2) 
    &\leq \sigma^{2s}(C+\mbox{Tail}(u; 0,1/2)).
   \end{split}
   \]   
    Returning to \eqref{e:sup-bound}, we now have
    \[
    \sup_{B_{1/10}} u \leq   C \delta^{-d/(4s)}(\rho+ \sigma^{2\alpha}\mbox{Tail}(u;0,1/2)^2)^{1/2} +  \delta \sigma^{2s}(C+\mbox{Tail}(u;0,1/2)).
    \]
    Choosing 
    \[
    \delta = \frac \tau {2C}, \quad \rho \leq \rho_0 = \frac 1 2 \left( \frac \tau {2C}\right)^{2(1+d/(4s))},
    \]
    and
    \[
    \sigma = \min\left\{r_0, \rho^{1/(2\alpha)}, \frac 1 2 (2\delta \cdot 10^{2s} c_{d,s})^{-1/(2s)}\right\},
    \]
    we obtain the conclusion of the lemma.
\end{proof}

Next, by iterating Lemma \ref{l:local-sup}, we establish the optimal H\"older continuity of order $s$ at points of the free boundary:

\begin{proof}[Proof of Theorem \ref{t:s-reg}]

Choose $\tau = \frac 1 2 10^{-s}$, and let $\rho_0$ be the corresponding constant from Lemma \ref{l:local-sup}. We define
\[
w(x) = \kappa u(x_0 +r x),
\]
where 
\[
r = \min\left\{ \frac 1 2 \mbox{dist}(x_0,\partial \Omega), \rho_0^{1/(2s)}\right\}, \quad \kappa = \frac 1 {\sup_{B_r(x_0)} u + \mbox{Tail}(u;x_0,r/2)/(10^s c_{d,s})}.
\]
Then we have $\mbox{Tail}(w;0,1/2) \leq 1$ and $\left(\fint_{B_1} w^2 \dd x\right)^{1/2} \leq \sup_{B_1} w \leq 1$, and $w$ is a minimizer of $\mathcal J_{\tilde K,\rho, 0}$ with $\rho\leq \rho_0$ and $\tilde K$ satisfying the usual ellipticity bounds \eqref{e:ellipticity}.

Assume by induction that for some integer $n\geq 0$,
\begin{equation}\label{e:sup-induction}
\sup_{B_{10^{-n}}} w \leq \frac 1 {10^{ns}},
\end{equation}
and that 
\begin{equation}\label{e:sup-induction2}
\mbox{Tail}(w;0, 10^{-n}/2) \leq c_{d,s}10^{-(n-1)s},
\end{equation}
where $c_{d,s}$ is the constant from Lemma \ref{l:local-sup}. The base case $n=0$ is true because of our definition of $w$.

Defining
\[
w_n(x) = 10^{ns} w\left(\frac x {10^{n}}\right),
\]
Lemma \ref{l:scaling} implies that $w_n$ minimizes $\mathcal J_{K_n, \rho_n, 0}$ over $\mathcal H^{s}_{g_n}(B_1)$, with 
\[
\begin{split}
K_n(x,y) &= 10^{-n(d+2s)} K(10^{-n} x, 10^{-n}y),\\
\rho_n &= \rho,\\
g_n(x) &= 10^{-ns} u(10^{-n} x), \quad x \in \R^d\setminus B_1.
\end{split}
\]
Note that $\rho_n = \rho$ because our rescaling is critical for the functional $\mathcal J_K$.  From the inductive hypotheses, we have
\[
\fint_{B_1} w_n^2 \dd x = 10^{2ns} \fint_{B_{10^{-n}}} w^2 \dd x \leq 1,
\]
and
\[
\begin{split}
\mbox{Tail}(w_n;0,1/2) &= 2^{-2s} \int_{\R^d\setminus B_{1/2}} \frac{10^{ns} w(10^{-n} x)} {|x|^{d+2s}} \dd x\\
&= 10^{-ns} 2^{-2s} \int_{\R^d\setminus B_{10^{-n}/2}} \frac{ w(x)} {|x|^{d+2s}} \dd x\\
&= 10^{ns} \mbox{Tail}(w;0,10^{-n}/2)\\
&\leq c_{d,s}10^{s}.
\end{split}
\]
We now apply Lemma \ref{l:local-sup} with $\tau = \frac 1 2 10^{-s}$ and obtain
\[
\begin{split}
\sup_{B_{1/10}} w_n 
&\leq  \frac 1 2 10^{-s}  + \frac 1 {2 \cdot 10^{2s} c_{d,s}} \mbox{Tail}(w_n;0,1/2)\\
&\leq 10^{-s}.
\end{split}
\]
Changing variables back to $w$, this gives
\[
\sup_{B_{10^{-(n+1)}}} w \leq 10^{-(n+1)s}.
\]
Our two inductive hypotheses \eqref{e:sup-induction} and \eqref{e:sup-induction2} also imply
\[
\begin{split}
\mbox{Tail}(w;0,10^{-(n+1)}/2 )
&= 
2^{-2s} 10^{-2(n+1)s} \int_{\R^d\setminus B_{10^{-(n+1)}/2}} \frac {w(x)}{|x|^{d+2s}} \dd x\\
&= 
2^{-2s}10^{-2(n+1)s} \int_{B_{10^{-n}/2}\setminus B_{10^{-(n+1)}/2}}\frac {w(x)}{|x|^{d+2s}} \dd x + 10^{-2s}\mbox{Tail}(w;0,10^{-n}/2)\\
&\leq  
2^{-2s} 10^{-2(n+1)s} 10^{-ns} \int_{B_{10^{-n}/2}\setminus B_{10^{-(n+1)}/2}}\frac {1}{|x|^{d+2s}} \dd x + c_{d,s}10^{-2s} 10^{-(n-1)s}\\
&= 
\left(\frac {d\omega_d}{2s} 2^{-2s} + 10^{-s}c_{d,s}\right)10^{-ns} \\
&= c_{d,s} 10^{-ns},
\end{split}
\]
by our definition \eqref{e:cds} of $c_{d,s}$. This allows us to close the induction.

We have shown that \eqref{e:sup-induction} holds for all $n$. Finally, for any $x\in B_{1/10}$, we can choose $n$ so that $10^{-(n+1)} < |x| < 10^{-n}$. From \eqref{e:sup-induction}, we have
\[
w(x) \leq \sup_{B_{10^{-n}}} w \leq \frac 1 {10^{ns}} \leq 10^s |x|^s,
\]
and the proof is complete after changing variables from $w$ back to $u$.
\end{proof}

\section{Non-degeneracy estimates}\label{s:nond}

In this last section, we prove the nondegeneracy estimate of Theorem \ref{t:nondeg}. 

\begin{proof}[Proof of Theorem \ref{t:nondeg}]

    Let $x_0, z\in \Omega$ be as in the statement of the theorem, i.e. $u(z)>0$, $x_0\in \partial\{u>0\}$, and  $\dist(z,\partial\{u>0\}) = |z-x_0|$.     
    Let $r = |z-x_0|$. Since $u$ satisfies the homogeneous equation $\mathcal L_K u = 0$ in $B_{3r/4}(z)$, the Harnack inequality \cite[Theorem 11.1]{CS2009} provides a constant $C_H>0$ independent of $r$ with 
    \[
    u(x) \leq C_H u(z), \quad x \in \partial B_{r/2}(z).
    \]

    Taking a smooth cutoff function $\zeta$ with $\zeta = 0$ in $B_{r/4}(z)$ and $\zeta = 1$ outside $B_{r/2}(z)$, with $|\nabla \zeta|\leq C/r$, we define the test function
    \[
    v(x) = \min\{u(x), C_H u(z) \zeta(x)\}, \quad x\in B_{r/2}(z),
    \]
    and $v=u$ outside $B_{r/2}(z)$.  (Note that $u(z)$ is a fixed constant.) Since $v=u$ on $\partial B_{r/2}(z)$ and outside of $B_{r/2}(z)$, the minimizing property $\mathcal J_K(u) \leq \mathcal J_K(v)$ gives 
    \begin{equation}\label{e:measure}
    \begin{array}{lll}
       \displaystyle|\{u>0\}\cap B_{r/2}(z) | \hspace{-0.4cm} &-& \hspace{-0.2cm} |\{v>0\}\cap B_{r/2}(z)| \le \vspace{0.2cm} \\ 
       &\leq& \hspace{-0.2cm} \displaystyle \iint_{\R^{2d}} K(x,y)\left[  |v(y) - v(x)|^2 - |u(y) - u(x)|^2\right] \dd y \dd x \vspace{0.2cm} \\
       &=&  \hspace{-0.2cm} \displaystyle \iint_{(B_{r/2}(z))^2} K(x,y)\left[  |v(y) - v(x)|^2 - |u(y) - u(x)|^2\right] \dd y \dd x \vspace{0.2cm} \\
       &+& \hspace{-0.2cm} \displaystyle 2\int_{\R^d\setminus B_{r/2}(z)} \int_{B_{r/2}(z)}  K(x,y)\left[  |v(y) - v(x)|^2 - |u(y) - u(x)|^2\right] \dd y \dd x\vspace{0.2cm} \\
       &=:& \hspace{-0.2cm} \displaystyle I_1 + I_2,
       \end{array}
       \end{equation}
       where we have used the symmetry property $K(x,y) = K(y,x)$. 
       For $I_1$, we have
       \[
       \begin{split}
     I_1   &= C_H^2 u(z)^2\iint_{(B_{r/2}(z))^2 \cap \{u\neq v\}} K(x,y) |\zeta(y) - \zeta(x)|^2 \dd y \dd x\\
        &\leq C_H^2 u(z)^2 \Lambda \|\nabla \zeta\|_{L^\infty}^2\iint_{(B_{r/2}(z))^2} |x-y|^{2-d-2s} \dd y \dd x\\
        &\leq \frac{C C_H^2 }{r^2} u(z)^2 r^{d+2-2s},
    \end{split}
    \]
    for a constant $C>0$ depending only on $d$, $s$, and $\Lambda$.    For $I_2$, since $u=v$ outside $B_{r/2}(z)$, one has
    \[
    \begin{split}
    I_2 &= 2 \int_{\R^d\setminus B_{r/2}(z)}\int_{B_{r/2}(z)} K(x,y) (v(y) - u(y))(v(y) + u(y) + 2u(x)) \dd y \dd x \leq 0,
    \end{split}
    \]
    since $v \leq u$ and $v(y)  + u(y) + 2u(x) \geq 0$.

     Next, we bound the left side of \eqref{e:measure} from below using the fact that $u>0$ in all of $B_{r/2}(z)$:
    \[
    |\{u>0\}\cap B_{r/2}(z)| - |\{v>0\}\cap B_{r/2}(z)| = |B_{r/2}(z)| - |B_{r/2}(z)\setminus B_{r/4}(z)| = c_d r^d,
    \]
    for some $c_d>0$ depending only on the dimension $d$. We conclude
    \[
    u(z)^2 \geq C r^{2s},
    \]
    as desired.
\end{proof}





\begin{thebibliography}{99}

\bibitem{AC} Hans Alt and Luis Caffarelli,  
{\it Existence and regularity for a minimum problem with free boundary. }
J. Reine Angew. Math. 325 (1981), 105--144. 

\bibitem{ACF} Hans Alt, Luis Caffarelli, and Avner Friedman, 
{\it Variational problems with two phases and their free boundaries.}
Trans. Amer. Math. Soc. 282 (1984), no. 2, 431--461.

\bibitem{ACF1984quasilinear}  Hans Alt, Luis Caffarelli, and Avner Friedman,  
{\it A free boundary problem for quasilinear elliptic equations. }
Ann. Scuola Norm. Sup. Pisa Cl. Sci. (4) 11 (1984), no. 1, 1--44. 

\bibitem{CRS2010} Luis A. Caffarelli, Jean-Michel Roquejoffre, and Yannick Sire, 
   {\it Variational problems for free boundaries for the fractional
   Laplacian}, J. Eur. Math. Soc., 12(5):1151--1179, 2010.

\bibitem{CSS} Luis A. Caffarelli,  Sandro Salsa, and Luis Silvestre {\it Regularity estimates for the solution and the free boundary of the obstacle problem for the fractional Laplacian}
Invent. Math. 171 (2008), no. 2, 425--461.

\bibitem{CS2009} Luis  Caffarelli and Luis Silvestre, {\it 
Regularity theory for fully nonlinear integro-differential equations}, Comm. Pure Appl. Math., 62(5):597--728, 2009.

\bibitem{DeS-Notices} Daniela De Silva 
{\it On Bernoulli-type elliptic free boundary problems.} Notices Amer. Math. Soc.70(2023), no.11, 1782--1791.

\bibitem{DeS-S} Daniela De Silva and Ovidiu Savin {\it
Regularity of Lipschitz free boundaries for the thin one-phase problem.} 
J. Eur. Math. Soc. (JEMS)17(2015), no.6, 1293--1326.

\bibitem{DeSR}  Daniela De Silva and Jean-Michel Roquejoffre {\it Regularity in a one-phase free boundary problem for the fractional Laplacian.} Ann. Inst. H. Poincar\'e C Anal. Non Lin\'eaire 29 (2012), no. 3, 335–367.

\bibitem{DKP2016} Agnese Di Castro, Tuomo Kuusi, and Giampiero Palatucci, {\it Local behavior of fractional $p$-minimizers}, Ann. I.H. Poincar\'e (C) Analyse Non Lin\'eare, 33:1279--1299, 2016.

\bibitem{EKPSS} Max Engelstein, Aapo Kauranen, Mart\'i Prats, Georgios Sakellaris, and Yannick
Sire, {\it Minimizers for the thin one-phase free boundary problem}, Comm. Pure Appl. Math., 74(9):1971--2022, 2021.

\bibitem{FR} Xavier Fern\'andez-Real and Xavier Ros-Oton, {\it Stable cones in the thin one-phase problem}, Amer. J. Math. (2024), in
press.


\bibitem{kassmann2009} Moritz Kassmann, {\it A priori estimates for integro-differential operators
with measurable kernels}, Calc. Var. Partial Differential Equations 34(1):1--29, 2009.

\bibitem{rosoton2024} Xavier Ros-Oton and Marvin Weidner, {\it Optimal regularity for nonlocal elliptic equations and free boundary problems}, preprint. ArXiv:2403.07793, 2024.

\bibitem{Luisito1} Luis Enrique  Silvestre, {\it Regularity of the obstacle problem for a fractional power of the Laplace operator.} ProQuest LLC, Ann Arbor, MI, 2005, 95 pp.
ISBN: 978-0542-25310-2.

\bibitem{Luisito2} Luis Silvestre, {\it Regularity of the obstacle problem for a fractional power of the Laplace operator.} Comm. Pure Appl. Math. 60 (2007), no. 1, 67--112.

\bibitem{fractional-poincare-review}
Sire, Yannick and Wang, Yi, {\it Fractional Poincar\'e inequalities in various settings.}
Bruno Pini Mathematical Analysis Seminar 8:43--54, 2017.

\bibitem{ST2022unbounded} Stanley Snelson and Eduardo Teixeira, {\it On the Bernoulli problem with unbounded jumps}, preprint. ArXiv:2210.11494, 2022.

\end{thebibliography}
\end{document}